\documentclass{article}
\usepackage{graphicx} 
\usepackage{commandesarti}

\title{Internal structures in the category of right-preordered groups}

\begin{document}
\date{}
\maketitle
\begin{abstract}
	We give explicit axioms for the algebraic theory of the quasivarieties of right-preordered groups and preordered groups. We then look at lattices of effective equivalence relations, which turn out to be similar to the lattices of equivalence relations in the category of groups. Once this is established, we study internal structures in the category of right-preordered groups. We start with some general results and then prove the $\sS\mhyphen$protomodularity of the category of right-preordered groups, when considering the class $\sS$ of Schreier split epimorphisms. Following this, we investigate further and prove that the category of right-preordered groups turns out to be action representable when we restrict our attention to split epimorphisms in $\sS$. Relatively to this class of split epimorphisms, we define the notion of $\sS\mhyphen$precrossed modules, and then of $\sS\mhyphen$crossed modules; that correspond exactly to Schreier internal reflexive graphs and Schreier internal categories, respectively. Lastly, we characterize groupoids among Schreier internal categories and give some examples. 
\end{abstract}
\section*{Introduction}

A right-preordered is a group endowed with a preorder (i.e. a transitive and reflexive relation) over its underlying set such that the group operation is monotone on the right, i.e. a group $G$ together with a preorder $\leq $ such that for all $g,h,k$ in $G$, $g\leq h\ri g+k\leq h+k$.  Right-preordered groups appear in the literature as early as 1942 in \cite{Orderedgroups}, where F.W. Levi investigated quotients of totally preordered groups. Later, in the context of Fourier analysis,  W. Rudin introduced totally ordered abelian groups in \cite{FourierAnalysisOnGroups}, and proved, most notably, that any archimedean, i.e. such that for all $x\geq y\geq 0$, exists at least one natural integer $n$ such that $ny\geq x$, totally ordered abelian group is isomorphic (via order-preserving isomorphism) to a subgroup of the field of real numbers. Later, the notion of left-ordered groups (which is equivalent to the notion of right-ordered groups), became important since continuous order-preserving actions of a given group $G$ on the real line were able to be associated to left-preorders over $G$ (see \cite{Orderedgroupsandtopology} for instance). Right-preordered groups also appeared in \cite{PartiallyOrderedGroups}, where some preorders are described as compatible only on one side for some groups.  

More recently, in \cite{RPOgrpCatPersp}, M.M. Clementino and A. Montoli investigated the category $\crpogrp$ of right-preordered groups and monotone group morphisms. This work follows a similar article they wrote about preordered groups, which are groups endowed with a preorder compatible on both sides with the group law (\cite{OCatBehaPOGrp}). They proved that the category of right-preordered groups (similarly to the category of preordered groups), does not share most of the "good" algebraic properties with the category $\cgrp$ of groups. For instance, the category is neither Bourn-protomodular nor Barr-exact. Consequently, they looked into understanding split extensions (for some fixed kernel and codomain), and found both examples of some pairs of right-preordered groups $(K,C)$ that could not induce a split extension with kernel $K$ and codomain $C$, and pairs such that there were an uncountable infinity of such split extensions (example 4.6 in \cite{RPOgrpCatPersp}). Moreover, even though the forgetful functor from $\crpogrp$ to the category $\cgrp$ of groups is topological, the algebraic behaviour of $\crpogrp$ is closer to the behaviour of the category $\cmon$ of monoids that it is to the category of groups. In view of this observation, as well as the characterization of central extensions of preordered groups made by M. Gran and A. Michel in \cite{CentrExtPOGrp}, it becomes natural to turn our attention to a special case of split epimorphisms. Indeed, in both \cite{CentrExtPOGrp} and \cite{Patchkoria}, a similar class of split epimorphisms appear. In \cite{Patchkoria}, A. Patchkoria considers the class of so-called Schreier split epimorphisms in the category $\cmon$, which corresponds to the classical notion of internal monoid action via endomorphisms.

In \cite{RPOgrpCatPersp}, M.M. Clementino and A. Montoli proved that both categories $\crpogrp$ and $\cpogrp$, the full subcategory of preordered groups, are quasivarieties by exhibiting a regular projective, regular generator object in both categories (Proposition A.1. in \cite{RPOgrpCatPersp}), and then with abstract arguments they showed that both quasivarieties are finitary quasivarieties (proposition A.2 in \cite{RPOgrpCatPersp}). However, it seems that no axiomatization has been given for none of the categories $\crpogrp$ or $\cpogrp$. The first section of this article is dedicated to the description of both categories $\crpogrp$ and $\cpogrp$ as quasivarieties of universal algebras. In particular we find a axiomatization $(\Sigma, \Hh)$ (see section \ref{Quasivarieties}) that makes explicit the fact that the category of monoid morphisms such that the codomain is a group is itself a variety of universal algebras, and injectivity of said monoid morphism is the only quasi-identity in $\Hh$. Considering the well-known description of $\crpogrp$ as the category of groups together with a fixed submonoid (see \cite{RPOgrpCatPersp}), the theory $(\Sigma, \Hh)$ induces an equivalence of categories between the category of $(\Sigma, \Hh)\mhyphen$algebras and $\crpogrp$. We finish the section by showing that $\cpogrp$ is not a subquasivariety of $\crpogrp$, but can still be recovered as category of algebras for a theory $(\Sigma',\Hh')$, with $\Sigma\subseteq \Sigma'$ and $\Hh\subseteq \Hh'$. Better still, $\Sigma$ and $\Sigma'$ differ by only one element, and $\Hh$ differs from $\Hh'$ by only two identities. It results that axiom \textbf{(Inj)} in section \ref{Quasivarieties}, reflecting the injectivity of previously mentioned morphism, is the only proper quasi-identity in both $\Hh$ and $\Hh'$,

With this result is established, the framework for the study of right-preordered groups becomes clearer for the second section. In particular, the description of kernel pairs becomes more intuitive (see \ref{effequivcomefromgrp}), and we are then able to investigate the links between lattices of normal subobjects in the category $\cgrp$ and normal subobjects in the category $\crpogrp$. For a given right-preordered group $(G, P_G)$, its lattice of normal subobjects in $\crpogrp$ turns out to be isomorphic to the lattice of normal subobjects of $G$ in $\cgrp$. This result is crucial. Indeed, the lattice of normal subobjects in $\cgrp$ is a modular lattice, thus the lattice of normal subobjects of $(G, P_G)$ is also a modular lattice, and because $\crpogrp$ is a normal category in the sense of Z. Janelidze (\cite{RPOgrpCatPersp}), the lattice of effective equivalence relations inherits the aforementioned property as well. Considering the work of M. Gran and D. Bourn in \cite{CatAspOfModula}, this result insures the uniqueness of the composition map over any reflexive graph (note that it is however necessary to be careful and revisit the proof given in \cite{CatAspOfModula}, because we have modularity of the lattice of effective equivalence relations over any right-preordered group, but it is not the case that the lattice of equivalence relations over an arbitrary right-preordered group is a modular lattice). This consideration also gives a natural example of proper relatively modular quasivariety in the sense of \cite{CommTheForRelModQuasiV}, fitting for some further investigations about commutator theory. 

In the fourth section, we shortly give general statements about internal categories and internal groupoids in $\crpogrp$, giving a broad vision of what we could expect to find in terms of internal structures and leading the way for the fifth section, which holds the more original results.

In the fifth section, we will present some results about internal structures  in the category $\crpogrp$. We start by proving the protomodularity of $\crpogrp$ relatively to the class $\sS$. In doing so, we will actually be proving a stronger result, and this is one of the main result of this article : relatively to $\sS$, the category $\crpogrp$ is action representable (theorem \ref{SActRepr}). We describe the split extension classifier (still relatively to $\sS$), and thus give a clear definition of what an internal action of right-preordered groups is. This result is to be compared to some other recent developments that have been made in the domain, namely the split extensions classifier that was discovered by M.M. Clementino and A. Montoli in \cite{AlgebraicexponentiationandactionrepresentabilityforV-groups} for all $V\mhyphen$groups with $V$ a cartesian quantale (in particular, for preordered groups), relatively to some class of split epimorphisms. The category $\crpogrp$ becomes another distinct example for the increasing list of categories that are protomodular and action representable relatively to a specific case of split extensions. 

Once we have further investigated internal actions, we take inspiration from the work of A. Patchkoria in \cite{Patchkoria} and we work towards a definition of crossed module of right-preordered groups, still relative to the class $\sS$. In order to do so, we first define $\sS\mhyphen$precrossed modules of right-preordered groups, which correspond to internal reflexive graphs in $\crpogrp$ whose "domain" morphism from a Schreier split epimorphism. We then define $\sS\mhyphen$crossed modules, which correspond to internal categories whose underlying reflexive graph has "domain" morphism is a Schreier split epimorphism. Lastly, we characterize groupoids among these internal categories. The proof heavily relies on the usual construction of (pre)crossed modules of groups. As such, it will be clear that any of the above mentioned correspondence is an equivalence of categories. This will be the object of section \ref{RG}.

We then recall the relative Smith-is-Huq condition, as defined in \cite{OntheSmithisHuqconditioninS-protomodularcategories}. We prove that even though the Smith-is-Huq condition does not hold for $\crpogrp$ (example \ref{NotSmithisHuq}), the version relative to the class $\sS$ does hold. 

We finish the article with some examples to show the diversity of internal categories in $\crpogrp$ when we do not restrict ourselves to split epimorphisms in $\sS$. With increasing complexity, we are able to provide examples of groupoids that are Schreier internal categories, but also internal categories that are not groupoids, groupoids that are not Schreier internal categories, and lastly, internal categories that are neither a groupoid nor a Schreier internal category. 

\section*{Preliminaries}
\subsection{The category of right-preordered groups}

	A \textit{right-preordered group} is a (not necessarily abelian) group $(G,0,+)$, together with a preorder $\leq$ over the set $\vert G\vert $ such that for any $g,h,k$ in $G$, $g\leq h$ implies $g+k\leq h+k$. For example, any preordered group, as studied in \cite{OCatBehaPOGrp} by M.M. Clementino and A. Montoli, is in particular a right-preordered group. As observed in \cite{RPOgrpCatPersp}, this last category is isomorphic to the full subcategory of monomorphisms $P_G\rightarrowtail G$, in the category $\cmon$ of monoids and monoid morphisms, such that $G$ is a group. Under this equivalence of categories, the category of preordered groups becomes isomorphic to the category of groups $G$, together with a submonoid $P_G\subseteq G$ (equivalently a monomorphism in $\cmon$) that is stable under conjugation by any element of $G$.  In either one of these categories, we shall write objects as pairs $(G,P_G)$, and view $P_G$ as a submonoid of the group $G$. We usually call $P_G$ the \textit{positive cone} of $G$. A morphism in any of these two categories will be written $(f,\overline{f})$, where $f$ is a group morphism and $\overline{f}$ is the restriction of $f$ to the positive cones. 

The category $\crpogrp$ is a quasivariety of universal algebras (Proposition A.1. in \cite{RPOgrpCatPersp}), which makes it a regular complete and cocomplete category. Moreover, the forgetful functor from $\crpogrp$ to $\cgrp$ sending a right-preordered group $(G, P_G)$ to $G$ is topological (\cite{RPOgrpCatPersp}, proposition 2.5.), thus have both a left and a right adjoint. The other forgetful functor, from $\crpogrp$ to the category of preordered sets, turns out to be monadic (\cite{RPOgrpCatPersp}, proposition 2.5.). Considering both these facts, we have a good descriptions of limits, the group structure of a limit is computed as in the category $\cgrp$ of groups and group morphisms, and it is given the limit preorder. We also recall that epimorphisms in $\crpogrp$ are morphisms $(f,\overline{f})$ such that $f$ is surjective, and regular epimorphisms are exactly epimorphisms as previously such that $\overline{f}$ is also surjective over the positive cone. The category $\crpogrp$ also admits a zero object, which is the trivial group endowed with the trivial preorder. However, $\crpogrp$ fails to be a Barr-exact category (remark 2.6. in \cite{RPOgrpCatPersp}), $\crpogrp$ is therefore a proper quasivariety and not a variety of universal algebras. Still, it was proven that $\crpogrp$ is a normal category in the sense of \cite{Thepointedsubobjectfunctor} (\cite{RPOgrpCatPersp}, proposition 2.12.). 

\subsection{Precrossed and Crossed modules of groups}\label{PXXModGrp}

The lack of protomodularity in $\crpogrp$ is crucial, but following the works of A. Patchkoria in \cite{Patchkoria} for monoids, we will investigate a class of split epimorphisms $\sS$, which gives us some weaker notion of protomodularity, relatively to the class $\sS$ (\ref{SemiDirect}). To do this, we will rely on results in the category $\cgrp$, for which we briefly recall the main notions. The category $\cgrp$ is one of the main example of semi-abelian category (see \cite{SemiAbelianCategories}). As such, it is protomodular, and is suitable for a notion of internal crossed module. Crossed modules of groups, first introduced by J.C. Whitehead in \cite{CombinatorialhomotopyII}, have been defined in many other algebraic contexts (such as Lie algebras, commutative algebras, cocommutative Hopf algebras). In \cite{InternalCrossedModule}, G. Janelidze introduced a categorical notion of crossed module in any semi-abelian category.

To fully understand crossed modules of groups, we first recall the proof for the protomodularity of $\cgrp$.

Given in the category $\cgrp$ a split epimorphism with codomain $G$ and given kernel $K$, depicted as 

\begin{center}
\begin{tikzcd}
K\ar[>->, "i",r]&X\ar[r, -regepi, "p", shift left=4pt]&G\ar[l, >->, "s"],
\end{tikzcd}
\end{center}

we get an isomorphism of groups $\phi: X\rightarrow K\rtimes_{\conj{s}} G$ given by $\phi(x)=(x-sp(x), p(x))$ with inverse $(k,g)\mapsto k+s(g)$, where $i$ has been treated as an inclusion.  It is well known that this group isomorphism induces an isomorphism of split extensions in $\cgrp$ :

\begin{center}
\begin{tikzcd}
K\ar[>->, "i",r]\ar[d, equal]&X\ar[d, "\phi"]\ar[r, -regepi, "p", shift left=4pt]&G\ar[l, >->, "s"]\ar[d, equal]\\
K\ar[>->, "i_0",r]&K\rtimes_{\conj{s}}G \ar[r, -regepi, "p_1", shift left=4pt]&G\ar[l, >->, "i_1"]
\end{tikzcd}
\end{center}

The validity of the Split Short Five Lemma, which is equivalent to the property of protomodularity in the pointed context, holds in the category $\cgrp$ of groups, it is a consequence of the isomorphism above. If we write $\cPt{K,X}$ the set of split extensions with kernel $K$ and codomain $X$, up to isomorphism, we actually get a bijection $\cPt{K,X}\cong \cgrp(K,\Aut{X})$. This bijection extends to an isomorphism of functors $\cPt{\_,X}\cong \cgrp(\_,\Aut{X})$, which is a property known as $\textit{action representability}$ (see \cite{Ontherepresentabilityofactionsinasemiabeliancategory} for instance). Aside from the category $\cgrp$, the other classical example of action representable category is the category $\cLie{k}$ of Lie algebras over any field $k$ and Lie algebras morphisms, for which holds the isomorphism $\cPt{\_,X}\cong \cLie{k}(\_,\Der{X})$, where $\Der{X}$ is the Lie algebra of derivations of $X$.  

Adding some structures to split epimorphisms, we will also look at internal reflexive graphs. A reflexive graph in a category $\CC$ is a diagram 

\begin{center}
\begin{tikzcd}
X\ar[r, -regepi, "d", shift left=4pt]\ar[r, -regepi, "c"', shift right=4pt]&G\ar[l, >->, "e" description],
\end{tikzcd}
\end{center}

in $\CC$ where the morphism $e$ splits both regular epimorphisms $c$ and $d$. This notion of reflexive graph extends to a category $\cRG{\CC}$ by considering morphisms in $\cRG{\CC}$ as pairs of morphisms $(f_0,f_1)$ in $\CC$, making obvious squares commutative. It is well known that the category $\cRG{\cgrp}$ is equivalent to the category $\cpxmod{\cgrp}$ of precrossed modules of groups. In this last category, objects are quadruplets $(G,H,\mu: G\rightarrow \Aut{K}, \partial: K\rightarrow G)$, where $G,H$ are groups and $\mu, \partial$ are group morphisms such that for all  $g\in G$, $h\in H$, $\partial(\mu(g)(h))=g+h-g$. A morphism $f:(G,K,\mu, \partial)\rightarrow (G',K',\mu', \partial')$ is a pair of group morphisms $(f_0:G\rightarrow G', f_1:K\rightarrow K')$ such that $f_0\partial=\partial' f_0$ and for all $g\in G, h\in K, f_0(\mu(h)(g))=\mu'(f_1(h))(f_0(g))$.  The equivalence between $\cRG{\cgrp}$ and $\cpxmod{\cgrp}$ is as follows. With a precrossed module of groups $(G,K, \mu, \partial)$, we associate the reflexive graph 

\begin{center}
        \begin{tikzcd}
|[alias=HM]|K\rtimes_\mu G &|[alias=HR]|G.\\    
\arrow[-regepi, from=HM, to=HR, shift left=4, "p_1"]
    \arrow[-regepi, from=HM, to=HR, shift right=4, "\partial p_1+p_0"']
    \arrow[>->, from=HR, to=HM, shift left=0, "i_1" description]
        \end{tikzcd}
\end{center}

With a morphism of precrossed module of groups $(f_0:G\rightarrow G', f_1:K\rightarrow K')$, we associate 

\begin{equation}
        \begin{tikzcd}
|[alias=HM]|K\rtimes_\mu G \ar[d, "f_1\times f_0"']&|[alias=HR]|G\ar[d, "f_0"]\\    
|[alias=HMM]|K'\rtimes_{\mu'}G' &|[alias=HRR]|G'\\
\arrow[-regepi, from=HM, to=HR, shift right=4, "p_1" description]
    \arrow[-regepi, from=HM, to=HR, shift left=4, "\partial p_0+p_1"]
    \arrow[>->, from=HR, to=HM, shift left=0, "i_1" description]
    \arrow[-regepi, from=HMM, to=HRR, shift left=4, "p_1'" description]
    \arrow[-regepi, from=HMM, to=HRR, shift right=4, "\partial'p_0+p_1"']
    \arrow[>->, from=HRR, to=HMM, shift left=0, "i_1" description]
        \end{tikzcd}\label{equivalencemorphismsRG}
   \end{equation}

Conversely, with any reflexive graph 
\begin{center}
        \begin{tikzcd}
|[alias=HM]|X&|[alias=HR]|G\\ 
\arrow[-regepi, from=HM, to=HR, shift left=4, "d"]
    \arrow[-regepi, from=HM, to=HR, shift right=4, "c"']
    \arrow[>->, from=HR, to=HM, shift left=0, "e" description]
        \end{tikzcd}
   \end{center}

we associate the precrossed module of groups $(G,\ker{(d)}, \conj{e}, c_{\vert \ker{(d)}})$ and a morphism $(f_0,f_1)$ is associated to $(f_0, {f_1}_{\vert\ker{(d)}})$.

This equivalence restricts well to crossed module of groups and internal categories. Since the category $\cgrp$ is a Mal'tsev category (example 2.2.4. in \cite{Malcevprotomodularhomologicalandsemiabeliancategories}), being an internal category for an internal reflexive graph is a property, not a structure. Crossed modules of groups are exactly precrossed modules $(G,K, \mu, \partial)$ of groups such that the so-called Peiffer identity holds : 

\begin{align*}
\textbf{(P)}\quad \forall k,l\in K, \mu(\partial(k))(l)=k+l-k
\end{align*}

Most importantly the corresponding reflexive graph is an internal category exactly when the corresponding precrossed module of groups is a crossed module of groups. One can find a detailed version of the equivalence of categories between the category $\cCat(\cgrp)$ of internal categories and internal functors in $\cgrp$ and the category of crossed modules of groups \cite{Categoriesfortheworkingmathematician}, chapter XII., section 8.

\subsection{The Smith-is-Huq property}

 Another related property of the category $\cgrp$ that we will also investigate in the category $\crpogrp$ is the so-called "Smith-is-Huq" property. Existence of a composition morphism for an internal reflexive graph is linked to some notion of centrality, namely the Smith-centrality of equivalence relations, or, equivalently in $\cgrp$, the Huq-commutativity of corresponding normal monomorphisms. We recall both definitions of Smith-centrality and Huq-commutativity. 

\begin{Definition}[Huq-commutativity, \cite{Commutatornilpotencyandsolvabilityincategories}]
    We say that two subobjects $X,Y$ of an object $Z$ \textit{commute in the sense of Huq} in a unital category $\CC$ if there exists a dashed arrow as below such that both triangles commute. 
    
    \begin{center}
        \begin{tikzcd}
            X\ar[>->, "i_0", r]\ar[>->, dr]&X\times Y\ar[d, dashed ]&Y\ar[>->, "i_1"', l]\ar[>->, dl]\\
            &Z&
        \end{tikzcd}
    \end{center}
\end{Definition}

In the category $\cgrp$ it is well known that such a morphism exists if and only if any element of $X$ commute with any element of $Y : xy=yx$ for any $x\in X,y\in Y$. The dashed arrow is then the group law restricted to subgroups $X$ on the left and $Y$ on the right.

\begin{Definition}[Smith-centrality, \cite{MR432511}, \cite{CentralityandNormalityinprotomodularCategories}]
    In a finitely complete category $\CC$, we say that two reflexive relations $R,S$ over the same object $X$ \textit{centralize each other in the sense of Smith} if in the pullback below exists an arrow $p$ such that $p\Delta_1=p_1^S$ and $p\Delta_0=p_0^R$.

    \begin{center}
        \begin{tikzcd}
            R\times_X S \ar[shift left=2, "p_1", r, -regepi]\ar[shift left=2, "p_0", d, -regepi]&S\ar[shift left=2, "\Delta_1", l, >->]\ar[d, -regepi, "p_0^S", shift left=2]\\
            R\ar[shift left=2, "p_1^R", r, -regepi]\ar[u, >->, "\Delta_0", shift left=2]&X\ar[l, >->, "\Delta_R", shift left=2]\ar[u, >->, "\Delta_S", shift left=2]
        \end{tikzcd}
    \end{center}
\end{Definition}

\begin{Definition}[Normalization of an equivalence relation,\cite{Normalizationequivalencekernelequivalenceandaffinecategories}]
In a finitely complete pointed category $\CC$, the \textit{normalization} of an equivalence relation $R$ over an object $X$ is the subobject $N$ of $X$, such that the following diagram is a pullback.

\begin{center}
        \begin{tikzcd}
           N\ar[d, >->]\ar[r,->]& R\ar[d, >->]\\
	X\ar[r, "(0\mathpunct{{,}}\id{X})"']&X\times X
        \end{tikzcd}
\end{center}
\end{Definition}

It is always true that if two equivalence relations $R,S$ centralize each other in the sense of Smith, then their normalization commute in the sense of Huq (see \cite{CentralityandNormalityinprotomodularCategories}). However the converse is not true in general. When it is true in a category $\CC$, we say that $\CC$ satisfies the \textit{Smith-is-Huq} property, it is the case of the category $\cgrp$, but fails to be the case for $\crpogrp$ (see example \ref{NotSmithisHuq}).

\section{The quasivariety of (right-)preordered groups}\label{Quasivarieties}
\subsection{Algebraic theory of right-preordered groups}

Without giving an explicit axiomatization, M.M. Clementino and A. Montoli proved that both categories $\cpogrp$ and $\crpogrp$ are finitary quasivarieties of universal algebra (\cite{RPOgrpCatPersp}, theorem A.2 and remark A.3.). We give an example of explicit axiomatization. 

First, we focus on right-preordered groups. To axiomatize this quasivariety, we consider a right-preordered group as a set $G\times P_G$, together with a monoid structure $((0_G, 0_{P_G}),+)$. To reflect the cartesian product of the set, we ask the set of operations to contain two projections $\pym_0$ and $\pym_1$ to respectively $G\times \{0_{P_G}\}$ and $\{0_G\}\times P_G$. Both $\pym_0$ and $\pym_1$ are monoid morphisms, so we recover a monoid structure on both $\pym_0(G\times P_G)$ and $\pym_1(G\times P_G)$. To get a group structure over $\pym_0(G\times P_G)$ we add an operation $-$ that acts as the inverse operation over $\pym_0(G\times P_G)$ and is "somehow orthogonal" to $\pym_1$. Lastly, we ask for an injection $\iym$ from $\pym_1(G\times P_G)$ to $\pym_0(G\times P_G)$ that is also a monoid morphism. Asking $\iym$ to be injective is actually the only quasi-identity that we get in this axiomatization.  

More formally, we give the following generalized equational Horn theory. The signature $\Sigma$ of the theory of this quasivariety consists of one function of arity zero that we will write $0$, four unary operations $\iym, \pym_0, \pym_1, -$ and one binary operation $+$. We will write $a+b$ for $+(a,b)$, $a-b$ for $a+-b$, $a_i$ for $\pym_i(a)$ $(i=1,2)$. We have the following axioms :

\begin{itemize}
    \item[\textbf{(M)}] $(0,+)$ is the usual theory of monoids. $\pym_0,\pym_1,\iym$ are monoid morphisms ($f(a)+f(b)=f(a+b)$, $f(0)=0$ for $f\in \{\pym_0,\pym_1, \iym$\}).
    \item[\textbf{(P1)}] $a=a_0+a_1=a_1+a_0$.
    \item[\textbf{(P2)}] $-(a_1)=(\iym(a))_1=(a_1)_0=(a_0)_1=0$.
    \item[\textbf{(P3)}] $\iym(a_1)=(\iym(a))_0$.
    \item[\textbf{(G)}] $a-a=-a+a=a_1$.
    \item[\textbf{(Inj)}] $\quad \iym(a)=\iym(b)\ri a_1=b_1$.    
\end{itemize}

This set of quasi-equations will be written $\Hh$. 

The category of models of this theory in the category $\cset$ will be written $\cAlg{\Sigma,\Hh}$. A model $X$ in $\cAlg{\Sigma, \Hh}$ will be written as $(X,0^X, \pym_0^X, \pym_1^X, -^X, \iym^X, +^X)$.

\begin{Theorem}
    We have the equivalence of categories  $$\cAlg{\Sigma,\Hh}\cong \crpogrp.$$
\end{Theorem}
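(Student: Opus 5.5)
The plan is to construct two functors, one in each direction, and check that their composites are naturally isomorphic to the identities. Throughout I use the description of $\crpogrp$ as pairs $(G,P_G)$, with $P_G$ a submonoid of the group $G$.

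\textbf{From $\crpogrp$ to algebras.} To $(G,P_G)$ I associate the set $G\times P_G$ with the componentwise monoid structure and the operations
$$0=(0,0),\qquad \pym_0(g,p)=(g,0),\qquad \pym_1(g,p)=(0,p),\qquad -(g,p)=(-g,0),\qquad \iym(g,p)=(p,0).$$
The axioms are then direct checks. For \textbf{(P1)}, $(g,0)+(0,p)=(g,p)=(0,p)+(g,0)$. For \textbf{(G)}, $(g,p)+(-g,0)=(0,p)=(-g,0)+(g,p)$. For \textbf{(Inj)}, $\iym(g,p)=\iym(g',p')$ forces $p=p'$. A morphism $(f,\overline f)$ is sent to $f\times\overline f$, which visibly preserves all operations.

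\textbf{From algebras to $\crpogrp$.} Given a model $X$, I put $G=\pym_0(X)$ and $P=\pym_1(X)$. I then proceed in the following order.

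First, $\pym_0$ and $\pym_1$ are idempotent. Applying \textbf{(P1)} to $a_0$ gives $a_0=(a_0)_0+(a_0)_1=(a_0)_0$, using \textbf{(P2)}; the same argument works for $a_1$. Hence $G$ and $P$ are submonoids of $X$, being images of monoid morphisms.

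Second, $G$ is a group. For $a\in G$ we have $a_1=(a_0)_1=0$, so by \textbf{(G)} the element $-a$ is a two-sided inverse of $a$ in the monoid $X$. Applying $\pym_0$ to $a+(-a)=0$ gives $a+(-a)_0=0$, so $(-a)_0$ is also an inverse of $a$. Uniqueness of inverses in a monoid then gives $-a=(-a)_0\in G$.

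Third, $\iym$ embeds $P$ into $G$. By \textbf{(P3)}, $\iym(a_1)=(\iym a)_0\in G$. Injectivity on $P$ follows from \textbf{(Inj)} together with idempotence of $\pym_1$: if $\iym(p)=\iym(q)$ for $p,q\in P$, then $p=p_1=q_1=q$. So $(G,\iym(P))$ is a right-preordered group.

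A morphism of algebras $h$ restricts to $h_{\vert G}:G\to G'$ and $h_{\vert P}:P\to P'$, and it commutes with $\iym$. This yields the morphism $(h_{\vert G}, h_{\vert P})$ in $\crpogrp$, with the second component being the restriction of the first along $\iym$.

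\textbf{The composites.} Starting from $(G,P_G)$ and going round gives back $(G\times\{0\},\{0\}\times P_G)$, which is visibly isomorphic to $(G,P_G)$. Starting from $X$, the candidate isomorphism is $a\mapsto(a_0,a_1)$ from $X$ to $G\times P$, with inverse $(g,p)\mapsto g+p$. It is bijective by \textbf{(P1)} and idempotence. It is a monoid morphism because $\pym_0$ and $\pym_1$ are, and it preserves $0$, $\pym_0$, $\pym_1$ and $\iym$ by \textbf{(P2)} and \textbf{(P3)}. Naturality in both directions is routine.

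\textbf{Main obstacle.} The delicate step is showing that the isomorphism respects $-$, i.e. that $-a=-(a_0)$ for arbitrary $a\in X$. This matters because $-$ is not assumed to be a morphism and \textbf{(G)} only yields $a+(-a)=a_1$.

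My plan is first to prove that $X$ is cancellative. The map $a\mapsto(a_0,\iym(a_1))$ is a monoid morphism $X\to G\times G$: it is multiplicative because $\iym$ and the $\pym_i$ are. It is injective by \textbf{(P1)} and \textbf{(Inj)}. Since $G\times G$ is a group, $X$ is cancellative.

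Then, from $a_0+a_1+(-a)=a_1$, I add $-(a_0)$ on the left. Using that $a_1$ commutes with $G$ (by \textbf{(P1)} applied to $a_0+a_1$), this gives
$$a_1+(-a)=-(a_0)+a_1=a_1+(-(a_0)).$$
Cancelling $a_1$ yields $-a=-(a_0)$, which is exactly the formula $-(g,p)=(-g,0)$. With this, the isomorphism commutes with every operation, and the equivalence $\cAlg{\Sigma,\Hh}\cong\crpogrp$ follows.
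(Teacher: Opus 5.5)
Your proposal is correct and follows the paper's route: the same two functors, and the same comparison isomorphism between $X$ and $\pym_0(X)\times\pym_1(X)$. You take that isomorphism in the direction $a\mapsto(a_0,a_1)$, the inverse of the paper's $m_X$, so additivity is immediate and you do not need the commutation $z_0+y_1=y_1+z_0$ the paper uses at that point. You are also more careful than the paper, which does not verify that $\pym_0,\pym_1$ are idempotent, that $-$ maps $\pym_0(X)$ into itself, or that the comparison map respects $-$, $\pym_i$ and $\iym$; your cancellativity argument giving $-a=-(a_0)$ settles that last point.
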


\begin{proof}
We consider a functor $\Fym: \crpogrp\longrightarrow \cAlg{\Sigma,\Hh}$ as follows. 

With any right-preordered group $\underline{G}=((G,e,s,\star),(P_G,1, *),i)$ (i.e. $i$ a monomorphism in $\cmon$ with domain the monoid $(P_G,1,*)$ and codomain the group $(G,e,s,\star)$, with neutral $e$ and inverse $s$), we associate the model of $\cAlg{\Sigma,\Hh}$    $\Fym(\underline{G})=(G\times P_G,0^{\underline{G}}=(e,1),\pym_0^{\underline{G}}:(a,b)\mapsto(a,1), \pym_1^{\underline{G}}:(a,b)\mapsto(e,b), -^{\underline{G}}:(a,b)\mapsto (s(a),1),$$ \iym^{\underline{G}}:(a,b)\mapsto (i(b),1), +^{\underline{G}} :((a,b),(c,d))\mapsto(a\star c,b*d))$ in $\cset$. With any morphism $(f,\overline{f})$ we simply associate $f\times \overline{f}$.

If $(f,\overline{f})$ is a morphism in $\crpogrp$ that is left composable with $(g,\overline{g})$, then $\Fym((f,\overline{f})\circ (g, \overline{g}))=\Fym(f\circ g, \overline{f}\circ \overline{g})=(f\circ g)\times (\overline{f}\circ \overline{g})=(f\times \overline{f})\circ (g\times \overline{g})=\Fym(f,\overline{f})\circ \Fym(g,\overline{g})$ so $\Fym$ is a functor. 
We check $\Fym(G)$ is an object in $\cAlg{\Sigma,\Hh}$. We chose a different symbol for all operations so we shall lose the exponent $\underline{G}$ for clarity. We write $(a,x), (b,y)$ two arbitrary elements of $G\times P_G$

\begin{itemize} 
    \item[\textbf{(M)}]  
    \begin{itemize}
        \item $(G\times P_G, (0,1), +)$ is a monoid, because it is the product  in $\cmon$ of two monoids,
        \item $\pym_0, \pym_1$ are monoid morphisms, being the product of canonical projections with the trivial morphism in  $\cmon$,
        \item $\iym$ is a monoid morphism because it respects the neutral and  \begin{align*}\iym((a,x)+(b,y))=&\iym(a\star b,x*y)\\&=(i(x*y),1)\\&=(i(x)\star i(y),1)\\&=(i(x),1)+ (i(y),1)\\&=\iym(a,x)+ \iym(b,y).\end{align*}
    \end{itemize}
    \item[\textbf{(P1)}] This is immediate because the operation $+$ is componentwise and $1$ commutes with every element of $P_G$, $e$ commutes with every element of $G$.     
    \item[\textbf{(P2)}] 
    \begin{itemize}
        \item $-\pym_1(a,x)=-(e,x)=(s(e),1)=(e,1)$,
        \item $\pym_1\iym(a,x)=\pym_1(i(x),1)=(e,1)$,
        \item $\pym_0\pym_1(a,x)=\pym_0(e,x)=(e,1)$,
        \item $\pym_1\pym_0(a,x)=\pym_1(a,1)=(e,1)$.
    \end{itemize}
    \item[\textbf{(P3)}] $ \iym \pym_1(a,x)=\iym(e,x)=(i(x),1)=\pym_0(i(x),1)= \pym_0\iym(a,x)$.
    \item[\textbf{(G)}] $(a,x) + (-(a,x))=(a,x)+ (s(a),1)=(e,x*1)\pym_1(a,x)$, and the other equality is the same. 
    \item[\textbf{(Inj)}] $\iym(a,x)=\iym(b,y)\ri (i(x),1)=(i(y), 1)\ri x=y\ri \pym_1(a,x)=(e,x)=(e,y)=\pym_1(b,y)$ because $i$ is injective.    
\end{itemize}

Now let $(f,\overline{f}):((G,e_G, s_G, \star_G),(P_G, 1_G,*_G),i_G)\longrightarrow ((H,e_H,s_H, \star_H),(P_H,1_H,*_H),i_H)$ be a morphism in $\crpogrp$. We check $f\times \overline{f}$ is a morphism in $\cAlg{\Sigma, \Hh}$. 
\begin{itemize}
    \item The domain of $f\times\overline{f}$ is $G\times P_G$ and the codomain is $H\times P_H$ because $\overline{f}$ is the restriction of $f$ to positive cones,
    \item by the property of the product in the category of monoids, $f\times \overline{f}$ is a morphism of monoids,
    \item $(f\times \overline{f})\pym_0=\pym_0(f\times \overline{f})$, because in both case we get $f$ on the first component and the terminal map of $\cmon$ on the second one. The same holds for $(f\times \overline{f})\pym_1=\pym_1(f\times \overline{f})$,
    \item $(f\times \overline{f})(-)=(fs_G)\times 0=(s_H f)\times 0=-(f\times \overline{f})$ because $f$ is a group morphism, 
    \item $\forall (a,x)\in G\times P_G$, $(f\times \overline{f})\iym (a,x)=(fi_G(x), \overline{f}(1_G))=(i_H\overline{f}(x), 1_H)=\iym(f(a),\overline{f}(x))=\iym(f\times \overline{f})(a,x)$. 
\end{itemize}

Conversely we define the functor $\Gym : \cAlg{\Sigma, \Hh} \longrightarrow \crpogrp$ as follows :
with any model $X$, we associate the group $(\pym_0^X(X),0^X,-^X,+^X)$ together with a submonoid $(\pym_1^X(X),0^X,+^X)$, that is included in $\pym_0^X(X)$ via the operation $\iym^X_{\vert \pym_1^X(X)}:\pym_1^X(X)\rightarrow \pym_0^X(X)$. By $+^X$ we mean the obvious restriction of the operation $+^X$ of $X$ to $\pym_0^X(X)$ and $\pym_1^X(X)$ respectively. A morphism $f$ between two models $X$ and $Y$ is associated to $(f_{\vert \pym_0^X(X)},f_{\vert \pym_1^X(X)})$. We write $X_0:=\pym_0^X(X)$, $X_1:=\pym_1^X(X)$, $i^X:=\iym^X_{\vert \pym_1^X(X)}$, and lose exponents $X$ for readability.  
We prove that $\Gym(X)$ is a right-preordered group. 

First, $(X_0, 0, -, +)$,  $(X_1, 0, +)$ are monoids, being direct images of the monoid $(X,0, +)$ via monoid morphisms $\pym_0^X$ and $\pym_1^X$ respectively. Moreover,  for any $x$ in $ X_0, x+(-x)=(-x)+x=x_1=0$, because $x\in X_0$ and by axiom $\textbf{(P2)}$, thus $X_0$ is a group. Lastly,  
$i$ is a monoid morphism from $X_1$ to $X_0$ by definition and is injective ; for any $x,y$ in $X_1, i(x)=i(y)\ri x_1=y_1\ri x=y$ (Axiom $\textbf{(Inj)}$).
   
Now suppose $f$ is a morphism in $\cAlg{\Sigma,\Hh}$ from $X$ to $Y$. We prove $(f_0, f_1):=\Gym(f)$ is a morphism in the category $\crpogrp$. 
\begin{itemize}
    \item $f_0\pym_0^X=\pym_0^Yf_0$ so $f_0$ is a group morphism from $X_0$ to $Y_0$, being the restriction of a monoid morphism to a group with image in $Y_0$, also a group.
    \item $f_1\pym_1^X=\pym_1^Y f_1$ so $f_1$ is a monoid morphism from $X_1$ to $Y_1$. 
    \item $i^Yf_1=\iym^Yf_1=f\iym^X_{\vert \pym_1^X(X)}=fi^X=f(\pym_0^Xi^X+\pym_1^Xi^X)\Delta^X_{\vert\pym_1^X(X)}=f\pym_0^Xi^X=f_0i^X$, where $\Delta^X:X\rightarrow X\times X$ is the diagonal morphism. The fourth equality here comes from axiom $\textbf{(P1)}$.
\end{itemize}

Moreover if $f, g$ are two composable morphisms in $\cAlg{\Sigma,\Hh}$, $\Gym(f\circ g)=((f\circ g)_0, (f\circ g)_1)=(f\circ g_0, f\circ g_1)=(f_0\circ g_0, f_1\circ g_1)=\Gym(f)\circ \Gym(g)$ because for any morphism $h:A\rightarrow B$ in $\cAlg{\Sigma, \Hh}$, the diagram \ref{picommute} has to commute for $i\in \{0,1\}$.

\begin{equation}
\begin{tikzcd}
    A\ar[r,"h"]\ar[d,"\pym_i^A"']&B\ar[d,"\pym_i^B"]\\
    A\ar[r,"h"]&B
\end{tikzcd}\label{picommute}
\end{equation}

This proves $\Gym$ is a functor.

We prove that the composites of these two functors $\Fym$ and $\Gym$ are isomorphic to the identity functors.

$$
\func{\Fym\circ \Gym}{\cAlg{\Sigma,\Hh}}{\cAlg{\Sigma,\Hh}}{X}{X_0\times X_1}{f}{f_0\times f_1}
$$

The functor $\Fym\circ \Gym$ is isomorphic to the identity over $\cAlg{\Sigma,\Hh}$ via the natural transformation given over any object $X_0\times X_1$ by the morphism 

\begin{center}
$\mapp{m_X}{X_0\times X_1}{X}{(a,b)}{a+b,}$
\end{center}

 which has an inverse 

\begin{center}
$\mapp{m_X\mm}{X}{X_0\times X_1}{x}{(x_0,x_1)}$
\end{center}

exactly thanks to axiom $\textbf{(P1)}$. We can check that $m$ is indeed a morphism. Consider a $(\Sigma,\Hh)\mhyphen$algebra $X$. For any $(x_0, y_1)), (z_0,w_1)$ in $X_0\times X_1$,
\begin{align*}
m_X((x_0,y_1)+(z_0,w_1))&=m_X((x+z)_0,(y+w)_1)\\
&=(x+z)_0+(y+w)_1\\
&=x_0+z_0+y_1+w_1\\
&=x_0+y_1+z_0+w_1\\
&=m_X(x_0,y_1)+m_X(z_0,w_1)
\end{align*}
The fourth equality comes from the identity $\textbf{(P1)}$ and $\textbf{(P2)}$, since
\begin{align*}
            z_0+y_1&=(z_0+y_1)_1+(z_0+y_1)_0\\
            &=(z_0+y_1)_0+(z_0+y_1)_1\\
            &=y_1+z_0.
\end{align*}

Next we have to consider
$$
\func{\Gym\circ \Fym}{\crpogrp}{\crpogrp}{(G, P_G)}{(\pym_0(G\times P_G),\pym_1(G\times P_G))}{(f,\overline{f})}{((f\times \overline{f})_0, (f\times \overline{f})_1)),}
$$

which is isomorphic to the identity just by identifying $\pym_0(G\times P_G)$ with $G$, as well as $\pym_1(G\times P_G)$ with $P_G$. 
\end{proof}

\subsection{Algebraic theory for preordered groups}
 The category of preordered groups is not a subquasivariety of the quasivariety of right-preordered groups, because the category of preordered groups is not closed under subobjects in the category of right-preordered groups. To get an example, just consider the symmetric group $\Sym_4$ over four elements. It is a preordered group if we consider the positive cone $\Aym_4$, the alternating group over four elements. $\Aym_4$ is a normal subgroup thus a submonoid closed under $\Sym_4$-conjugation. With this group we have a subobject whose group is still $\Sym_4$ but with positive cone $P=\{\Id,(12)(34)\}$. It is a submonoid since it even is a subgroup of order 2 of $\Sym_4$, but it is not closed under conjugation in $\Sym_4$ because $(13)(12)(34)(13)=(32)(14)\not\in P$. 

As a consequence, the axiomatization of preordered groups can not be done by adding identities and quasi-identities to the theory of right-preordered groups. We can still describe its theory by adding a unary operation expressed by $\rhd$ that is "right-orthogonal" to $\pym_0$ and such that $\iym\rhd$ is the conjugation of $\iym \pym_1$ by $\pym_0$. Once again formally this is two new axioms :

\begin{itemize}
    \item[\textbf{(C1)}] $\iym\rhd(a)=a_0+\iym(a_1)-a_0$.
    \item[\textbf{(C2)}] $(\rhd(a))_0=0$.
\end{itemize}

    We write $\Sigma'=\Sigma\cup\{\rhd\}$ and $\Hh'=\Hh\cup\{\textbf{(C1)},\textbf{(C2)}\}$ so that the category of models in the category $\cset$ for all of the axioms of $\cAlg{\Sigma, \Hh}$ together with the operation $\rhd$ and the axioms $\textbf{(C1)}$ and $\textbf{(C2)}$ will be written $\cAlg{\Sigma', \Hh'}$. If $X$ is a model of $(\Sigma', \Hh')$ in $\cset$, we will write $\rhd^X$ to denote this new operation for $X$. 
    
\begin{Theorem}
    We have an equivalence of categories $$\cAlg{\Sigma', \Hh'}\cong \cpogrp.$$ 
\end{Theorem}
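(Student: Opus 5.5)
We extend the functors $\Fym$ and $\Gym$ of the previous theorem, using that $\cpogrp$ is the full subcategory of $\crpogrp$ of those $(G,P_G)$ with $P_G$ closed under conjugation by $G$.

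\textbf{Extending $\Fym$.} For a preordered group $\underline{G}$ we equip $\Fym(\underline{G})=G\times P_G$ with
\[
\rhd(a,x)=(e,\,a\star x\star s(a)).
\]
This lands in $G\times P_G$ exactly because $P_G$ is stable under conjugation. Axiom \textbf{(C2)} holds since the first component is $e$. Axiom \textbf{(C1)} holds since
\[
\iym\rhd(a,x)=(i(a\star x\star s(a)),1)=(a,1)+(i(x),1)-(a,1)=a_0+\iym(a_1)-a_0 .
\]
The axioms of $\Hh$ were already checked. For a morphism $(f,\overline f)$, the map $f\times\overline f$ commutes with $\rhd$ because $f$ is a group morphism and $\overline f$ is its restriction. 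Note that $\rhd$ is not required to be a monoid morphism, so nothing more is needed.

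\textbf{Extending $\Gym$.} For a model $X$ of $(\Sigma',\Hh')$, take $\Gym(X)=(X_0,X_1,i^X)$ as before. We show $i(X_1)$ is stable under conjugation by $X_0$. Given $g\in X_0$ and $p\in X_1$, put $a=g+p$. By \textbf{(P2)} and the monoid morphism properties, $a_0=g$ and $a_1=p$. By \textbf{(C2)} and \textbf{(P1)}, $\rhd(a)=(\rhd a)_1\in X_1$. Then \textbf{(C1)} gives
\[
g+i(p)-g=i(\rhd(a)),
\]
which lies in $i(X_1)$. So $\Gym(X)$ is a preordered group.

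For morphisms, a morphism $f$ of $(\Sigma',\Hh')$-algebras is in particular a morphism of $(\Sigma,\Hh)$-algebras, so $\Gym(f)$ is a morphism of $\crpogrp$. Since $\cpogrp$ is full, it is a morphism of $\cpogrp$. Functoriality of both extended functors is inherited from the previous theorem.

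\textbf{Natural isomorphisms.} The isomorphism $\Gym\Fym\cong\mathrm{Id}$ is the same identification as before, since fullness means nothing new must be checked. The main obstacle is $\Fym\Gym\cong\mathrm{Id}$: we must show that $m_X\colon X_0\times X_1\to X$, $(a,b)\mapsto a+b$, also preserves $\rhd$. Here $\rhd^X$ is a priori an arbitrary operation, and the claim is that \textbf{(C1)}, \textbf{(C2)} and \textbf{(Inj)} pin it down. For $x\in X$, \textbf{(C2)} gives $\rhd(x)=(\rhd x)_1$. Then \textbf{(C1)} together with \textbf{(P3)} gives
\[
\iym\big((\rhd x)_1\big)=\iym(\rhd x)=x_0+\iym(x_1)-x_0 .
\]
Since $i$ is injective on $X_1$ by \textbf{(Inj)}, $\rhd(x)$ is the unique element $q\in X_1$ with $\iym(q)=x_0+\iym(x_1)-x_0$. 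This is exactly the transported conjugation $\rhd^{\Fym\Gym(X)}(x_0,x_1)$. Hence $m_X$ commutes with $\rhd$, naturality is unchanged, and the equivalence follows.
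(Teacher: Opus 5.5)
Your proof is correct and rests on the same two facts as the paper's. First, \textbf{(C1)} makes $i(X_1)$ stable under conjugation by $X_0$. Second, \textbf{(C1)}, \textbf{(C2)} and \textbf{(Inj)} force $\rhd$ to be the unique lift of conjugation, which is also how the paper identifies the operation on a preordered group as $(a_0,a_1)\mapsto(0,a_0+a_1-a_0)$.

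The paper packages this uniqueness as fullness of $\cAlg{\Sigma',\Hh'}$ inside $\cAlg{\Sigma,\Hh}$ and then restricts the earlier equivalence, whereas you lift $\Fym$, $\Gym$ and check that $m_X$ preserves $\rhd$. Your explicit use of \textbf{(C2)} and \textbf{(P1)} to get $\rhd(x)\in X_1$ supplies a step the paper leaves implicit: \textbf{(Inj)} alone only gives equality of $\pym_1$-components, not equality of elements.
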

\begin{proof}
    We only need to check that $\cAlg{\Sigma', \Hh'}$ is a full subcategory of $\cAlg{\Sigma, \Hh}$ and then that the objects correspond exactly to the preordered groups via the equivalence above. Let $X,Y$ be objects in $\cAlg{\Sigma', \Hh'}$, and $f:X\rightarrow Y$ be an arrow in $\cAlg{\Sigma, \Hh}$, i.e. we do not require $f$ to commute with $\rhd$. 

    We have, for any $a\in X$,
    \begin{align*}
        \iym^Y f\rhd^X (a)&=f\iym^X \rhd^X (a)\\
        &=f(a_0+^X\iym^X(a_1)-^X a_0)\\
        &=f(a_0)+^Y\iym^Y f(a_1)-^Yf(a_0)\\
        &=f(a)_0+^Y\iym^Y f(a)_1-^Yf(a)_0\\
        &=\iym^Y\rhd^Y f(a)
    \end{align*}
    and thanks to axiom $\textbf{(Inj)}$, we can deduce that $\rhd^Y f=f\iym^X$. This proves the category $\cAlg{\Sigma', \Hh'}$ is a full subcategory of $\cAlg{\Sigma, \Hh}$. 

    Now if we apply the restriction of $\Gym$ to the theory $\cAlg{\Sigma', \Hh'}$, for an object $X$ in $\cAlg{\Sigma', \Hh'}$, we get a map $\Gym(\rhd^X)$. With axiom $\textbf{(C1)}$, we get the identity $\iym^X\Gym(\rhd^X)(a_0,a_1)=a_0+\iym^X(a_1)-a_0$. In particular, the image of $\iym^X(X_1)$ is stable under conjugation. This is exactly requiring that $X$ is a preordered group. Conversely for any preordered group $G\times P_G$, there is a unique map $\rhd^G$ satisfying $\textbf{(C1)}, \textbf{(C2)}$ that can be defined, namely $\rhd^G(a_0,a_1)=(0_G,a_0+a_1-a_0)$ (considering $P_G$ as a subset of $G$). Indeed $\textbf{(C2)}$ forces $(\rhd^G(a_0,a_1))_0=0_G$ and axiom $\textbf{(C1)}$ uniquely defines $(\rhd^G(a_0,a_1))_1=a_0+a_1-a_0$. This concludes the proof that the categories $\cAlg{\Sigma', \Hh'}$ and $\cpogrp$ are equivalent.     
\end{proof}
The operation $\pym_0$ induces the forgetful functor from $\crpogrp$ (or $\cpogrp$) to the category $\cgrp$ of groups and group morphisms under this equivalence. It is therefore topological in both cases (\cite{RPOgrpCatPersp}, proposition 2.5., \cite{OCatBehaPOGrp}, proposition 2.3.), thus faithful and preserves limits and colimits. 

\section{Some categorical properties of the category of right-preordered groups}

\subsection{Normal subobjects and equivalence relations }

The category of right-preordered group is normal in the sense of \cite{Thepointedsubobjectfunctor}, meaning every kernel pair is the smallest effective equivalence relation generated by the zero class in the kernel pair. A consequence of this is that we can restrict ourselves  to study the lattice of kernels in the category $\crpogrp$, instead of the lattice of effective equivalence relations. It was shown in  \cite{RPOgrpCatPersp} (remark 2.6.) for right-preordered groups that kernels are as follows :

\begin{center}
\begin{tikzcd}
    |[alias=HL]|\ker(\overline{f})= \ker(f)\cap P_G&|[alias=HM]|P_G&|[alias=HR]|P_H\\
    |[alias=LL]|\ker(f) &|[alias=LM]|G&|[alias=LR]|H.\\ 
    \arrow[>->, from=HL, to=LL]
    \arrow[>->, from=HM, to=LM, "i"]
    \arrow[>->, from=HR, to=LR]
    \arrow[from=HM, to=HR, "\overline{f}"]
    \arrow[from=LM, to=LR, "f"']
    \arrow[monoreg->, from=HL, to=HM, "\overline{k}"]
    \arrow[monoreg->, from=LL, to=LM, "k"']
\end{tikzcd}
\end{center}

In the left hand square above the positive cone of the kernel is actually the pullback of $k$ along $i$. The kernel $\ker(f)$ is the usual kernel in the category $\cgrp$, and therefore is a normal subgroup of $G$. From the algebraic theory point of view, this description of effective equivalence relations gives a useful tool to compute directly relations. We give it as a lemma. 

\begin{Lemma}\label{effequivcomefromgrp}
   We write $\cEq{f}$ the kernel pair of a morphism $f$.  Any effective equivalence relation $\cEq{f}=\cEq{f_0\times f_1}$ over $G\times P_G$ in $\crpogrp$ is uniquely determined by $P_G$ and $\cEq{f_0}$ over $G$ in $\cgrp$. Moreover, for all $(a_0,a_1),(b_0,b_1)$ in $G\times P_G$, $(a_0,a_1)\sim_{\cEq{f}}(b_0,b_1)$ iff $i(a_1)\sim_{\cEq{f_0}}i(b_1)$ and $a_0\sim_{\cEq{f_0}}b_0$. 
\end{Lemma}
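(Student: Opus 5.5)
Given a morphism $(f,\overline{f})\colon (G,P_G)\to (H,P_H)$ in $\crpogrp$, I will compute its kernel pair directly in the quasivariety $\cAlg{\Sigma,\Hh}$ through the equivalence $\Fym$ of the first theorem. There $(f,\overline{f})$ becomes $f\times\overline{f}\colon G\times P_G\to H\times P_H$, and I write $f_0=f$ and $f_1=\overline{f}$. In a quasivariety, limits are computed as in $\cset$, so the kernel pair is the set-theoretic relation $\{(x,y) : (f\times \overline{f})(x)=(f\times\overline{f})(y)\}$, which is automatically closed under all operations of $\Sigma$. The first step is therefore to unfold this equality componentwise: $(a_0,a_1)\sim(b_0,b_1)$ iff $f_0(a_0)=f_0(b_0)$ and $f_1(a_1)=f_1(b_1)$ in $P_H$.

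The second step turns the condition on positive cones into one on $G$. Since $\overline{f}$ is the restriction of $f$, we have $i_H\overline{f}=f i_G$, and $i_H$ is injective (axiom \textbf{(Inj)}, or directly because $P_H\subseteq H$). Hence $f_1(a_1)=f_1(b_1)$ iff $i_H f_1(a_1)=i_H f_1(b_1)$ iff $f_0(i(a_1))=f_0(i(b_1))$, that is, iff $i(a_1)\sim_{\cEq{f_0}} i(b_1)$. This is exactly the stated characterization. The only point needing care is that this step genuinely uses injectivity of $i_H$; with only the forward implication the relation on cones would not be recovered.

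Uniqueness follows at once from this characterization. The relation $\cEq{f}$ is expressed entirely in terms of the set $P_G$ (through $i$) and the group congruence $\cEq{f_0}$ on $G$. Two morphisms out of $(G,P_G)$ whose underlying group morphisms have the same kernel pair in $\cgrp$, equivalently the same kernel $\ker f_0$, therefore have the same kernel pair in $\crpogrp$. It is worth noting, consistently with the kernel diagram recalled above, that $\cEq{f}$ restricted to the positive cone is just $\cEq{f_0}\cap (P_G\times P_G)$, i.e. the pullback along $i$.

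I expect no real obstacle. The only subtle point is justifying that kernel pairs in $\crpogrp$ have underlying group the kernel pair in $\cgrp$ and the induced (pullback) positive cone. This follows either from the quasivariety description just established or from the fact, recalled in the preliminaries, that the forgetful functor to $\cgrp$ is topological and limits carry the limit preorder.
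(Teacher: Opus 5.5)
Your proof is correct and matches the paper's argument for the characterization: unfold the kernel pair componentwise, then use $i_H\overline{f}=f i_G$ together with injectivity of $i_H$ in both directions. The only minor difference is that the paper gets the first (uniqueness) assertion directly from the topologicality of the forgetful functor to groups, whereas you deduce it as a corollary of the explicit characterization; that is equally valid.
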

\begin{proof}
    The first assertion directly comes from the fact that $\pym_0$, seen as the forgetful functor, is topological over $\cgrp$ (\cite{RPOgrpCatPersp}, proposition 2.5).
    
    For the second assertion, suppose that for some $(a_0,a_1), (b_0,b_1)$ in $G\times P_G$, $(f_0(a_0),f_1(a_1))=(f_0(b_0),f_1(b_1))$. It is immediate that then $a_0\sim_{\cEq{f_0}}b_0$. Since $f$ is a morphism in $\crpogrp$, $i f_1(a_1)=f_0i(a_1)$ and $if_1(b_1)=f_0i(b_1)$. From these equalities comes $i(a)\sim_{\cEq{f_0}}i(b)$. Conversely, if $i(a)\sim_{\cEq{f_0}}i(b)$, then by definition $f_0i(a_1)=f_0i(b_1)$. Then, since $f$ is a morphism in $\crpogrp$, $if_1(a_1)=if_1(b_1)$. Lastly, because $i$ is injective, $f_1(a_1)=f_1(b_1)$. If we additionally suppose that $a_0\sim_{\cEq{f_0}}b_0$, then we get $(a_0, a_1)\sim_{\cEq{f}}(b_0, b_1)$.
\end{proof}

A full description of the lattice of kernels has already been given in \cite{CentrExtPOGrp}, lemma 2.4., for preordered groups, the exact same description still holds for right-preordered groups. We now recall it for clarity and completeness.

Given a right-preordered group $(G,P_G)$ and two normal subobjects $(N, N\cap P_G)$ and $(M, M\cap P_G)$, their join is given by

\begin{align*}
    (N,N\cap P_G)\lor (M, M\cap P_G)=(N\cdot M, (N\cdot M)\cap P_G)
\end{align*}

where $N\cdot M=\{n+m\vert n\in N, m\in M\}$ is the usual join of normal subgroups in $\cgrp$. The meet is just the intersection 
\begin{align*}
    (N,N\cap P_G)\land (M, M\cap P_G)=(N\cap M, N\cap M\cap P_G). 
\end{align*}

It was shown in \cite{CentrExtPOGrp} (proposition 2.5.) that for any given preordered group $(G, P_G)$, the lattice $\lnorm{G,P_G}$ is actually modular. Here we give another proof, note that this proof still holds in the category $\cpogrp$ and the proof in \cite{CentrExtPOGrp} still holds in the category $\crpogrp$. 

\begin{Lemma}\label{isolattices}
    Given a right-preordered group $(G, P_G)$, its lattice of normal subobjects $\lnorm{G,P_G}$ is isomorphic to the lattice of normal subgroups $\lnorm{G}$ of the group $G$. 
\end{Lemma}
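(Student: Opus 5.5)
The plan is to write down two explicit, order-preserving maps between the lattices and check that they are mutually inverse. By the description of kernels recalled above (\cite{RPOgrpCatPersp}, remark 2.6), every normal subobject of $(G,P_G)$ in $\crpogrp$ is a kernel of some morphism $(f,\overline{f})$. Such a kernel has the form $(\ker(f),\ker(f)\cap P_G)$, with $\ker(f)$ a normal subgroup of $G$. So I would define
\[
\Phi:\lnorm{G,P_G}\rightarrow \lnorm{G},\qquad (N,N\cap P_G)\mapsto N,
\]
which is just the forgetful functor $\pym_0$ applied to subobjects.

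In the other direction, set $\Psi(N)=(N,N\cap P_G)$ for a normal subgroup $N\trianglelefteq G$. The key step is to show that $\Psi(N)$ really is a normal subobject in $\crpogrp$, i.e. a kernel. For this I would use the quotient $q:G\rightarrow G/N$ and equip $G/N$ with the positive cone $q(P_G)$. This image is a submonoid of $G/N$, so $(G/N,q(P_G))$ is a right-preordered group, and $(q,\overline{q})$ is a morphism (indeed a regular epimorphism) in $\crpogrp$. Its kernel is computed as recalled above: the group part is $\ker q=N$, and the positive cone is the pullback of $N$ along the inclusion $P_G\rightarrowtail G$, namely $N\cap P_G$. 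Hence $\Psi(N)$ is a kernel.

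Next I would check that the two maps are mutually inverse. The composite $\Phi\Psi=\mathrm{id}$ holds trivially. For $\Psi\Phi=\mathrm{id}$, I must rule out two distinct normal subobjects with the same underlying group. This follows because the positive cone of any kernel is forced to be $N\cap P_G$, as shown in the kernel diagram. This point also relies on the fact that subobjects are taken up to isomorphism, with monomorphisms being injective on groups by faithfulness of $\pym_0$. Lemma \ref{effectivequivcomefromgrp} gives the same conclusion at the level of kernel pairs, if one prefers to argue through equivalence relations.

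Finally, both $\Phi$ and $\Psi$ are monotone. Inclusion $(N,N\cap P_G)\le(M,M\cap P_G)$ holds iff $N\subseteq M$, since $N\subseteq M$ forces $N\cap P_G\subseteq M\cap P_G$. An order isomorphism between lattices is automatically a lattice isomorphism, and this is consistent with the explicit join and meet formulas recalled above. The only step that needs real care is showing that every normal subgroup arises as the underlying group of some kernel in $\crpogrp$, i.e. choosing the image cone $q(P_G)$ on the quotient. I do not expect it to be a genuine obstacle.
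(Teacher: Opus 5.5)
Your proposal is correct and follows essentially the same route as the paper. Both use the bijection $(N,N\cap P_G)\leftrightarrow N$, given by $\pym_0$ one way and by pulling back $P_G\rightarrowtail G$ along $N\rightarrowtail G$ the other. Your quotient $(G/N,q(P_G))$ helpfully makes explicit why $(N,N\cap P_G)$ is actually a kernel, a step the paper leaves implicit. Your ``order isomorphism is a lattice isomorphism'' argument replaces the paper's direct check of joins and meets.

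One small correction: faithfulness of $\pym_0$ only gives reflection of monomorphisms, not preservation. Injectivity of monomorphisms should instead be credited to $\pym_0$ being a right adjoint, since it is topological. Your argument does not actually need this point, because kernels are already represented by inclusions.
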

\begin{proof}
      Consider the bijection 
      \begin{align*}
          (N,P_G\cap N)\in \lnorm{G,P_G}\longleftrightarrow N\in \lnorm{G}
      \end{align*}
      One direction is induced by the forgetful functor $\pym_0: \crpogrp\longrightarrow \cgrp$ and we get the inverse by taking the pullback of the inclusion of $P_G$ into $G$ along the inclusion of $N$ into $G$. It is immediate to see that the join is respected in both directions, as is the meet considering $(M\cap N)\cap P_G=(M\cap P_G)\cap(N\cap P_G)$. This bijection is therefore a lattice morphism, hence a lattice isomorphism. 
\end{proof}

This behavior of normal subobjects is to be understood through the following definition.

\begin{Definition}[Relatively modular quasivariety \cite{CommTheForRelModQuasiV}]
    A quasivariety of universal algebra $\KK$ is said to be \textit{relatively modular} if, for any $\KK$-algebra $A$, the lattice of effective equivalence relations over $A$, with the usual preorder, is a modular lattice. By definition, a lattice $L$ is modular if it satisfies modular identity $(a\land b)\lor(c\land b)=((a\land b)\lor c)\land b$ for all $a,b,c\in L$.
\end{Definition}

\begin{Proposition}\label{modular}
    $\crpogrp$ is a relatively modular quasivariety. 
\end{Proposition}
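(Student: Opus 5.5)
We reduce modularity of the lattice of effective equivalence relations on an arbitrary right-preordered group $(G,P_G)$ to modularity of the lattice of normal subgroups of $G$. By the equivalence $\cAlg{\Sigma,\Hh}\cong\crpogrp$, every $\crpogrp$-algebra may be taken to be (isomorphic to) $G\times P_G$, and an isomorphism of algebras induces an isomorphism of lattices of effective equivalence relations. So it suffices to treat objects $(G,P_G)$.

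First we identify the lattice of effective equivalence relations on $(G,P_G)$ with $\lnorm{G,P_G}$. Since $\crpogrp$ is normal in the sense of Z. Janelidze (\cite{RPOgrpCatPersp}, proposition 2.12), every effective equivalence relation is the kernel pair of its coequalizer. In a normal category it is therefore determined by its normalization, i.e. its zero class, which is a kernel. Conversely, every kernel $k$ is the normalization of the kernel pair of its cokernel.

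We make this bijection concrete with Lemma \ref{effequivcomefromgrp}. An effective relation $\cEq{f}$ is determined by $P_G$ and the group congruence $\cEq{f_0}$, hence by the normal subgroup $N=\ker f_0$. Two pairs are related iff $a_0-b_0\in N$ and $i(a_1)-i(b_1)\in N$. Every normal $N\trianglelefteq G$ arises this way, namely from the quotient $(G/N, q(P_G))$, whose kernel is $(N,N\cap P_G)$. Both assignments $R\mapsto$ normalization and $N\mapsto\cEq{q_N}$ are monotone for inclusion. The description above shows that $R\subseteq S$ iff $N_R\subseteq N_S$, so this is an order isomorphism, hence a lattice isomorphism. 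Note that we never need the join of effective relations to be the join taken among all relations.

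Composing with Lemma \ref{isolattices} gives a lattice isomorphism between effective equivalence relations on $(G,P_G)$ and $\lnorm{G}$. Finally, $\lnorm{G}$ is modular by the classical Dedekind law. If $A\subseteq C$ are normal subgroups and $B$ is normal, then $A(B\cap C)=AB\cap C$: an element $ab\in C$ with $a\in A\subseteq C$ forces $b\in C$. Modularity is preserved under lattice isomorphism, which concludes the proof.

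The main obstacle is the middle step. We must verify that the order on effective equivalence relations really corresponds to inclusion of normal subgroups, rather than merely having a bijection of sets. This rests on the explicit form of kernel pairs in Lemma \ref{effequivcomefromgrp}: the relation is determined by $N$ through membership of $a_0-b_0$ and $i(a_1)-i(b_1)$ in $N$. That form is visibly monotone in $N$, and monotone in the reverse direction by taking zero classes.
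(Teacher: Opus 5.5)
Your proposal is correct and follows essentially the paper's argument: normality of $\crpogrp$ identifies effective equivalence relations with normal subobjects, Lemma~\ref{isolattices} passes to $\lnorm{G}$, and modularity comes from normal-subgroup lattices of groups. You spell out two points the paper's one-line proof leaves implicit: that the correspondence is an order isomorphism, via the description in Lemma~\ref{effequivcomefromgrp}, and the Dedekind law for normal subgroups.
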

\begin{proof}
    This is immediate from the previous lemma since the variety of groups is congruence modular, and $\crpogrp$ is normal in the sense of Z. Janelidze (proposition 2.12 in \cite{RPOgrpCatPersp}). 
\end{proof}

\begin{Remark}
    Even though the lattice of normal subobjects in $\crpogrp$ is isomorphic to the lattice of normal sub-objects in $\cgrp$, $\crpogrp$ does not inherit the property of being a Mal'tsev category from $\cgrp$ (see theorem 3.5. in \cite{RPOgrpCatPersp}). Notice we do not even have $R\circ S=S\circ R$ for all effective equivalence relations $R,S$ because it has been shown in \cite{SomeRemMalGoursCat}, theorem 5.7 that the permutability of effective equivalence relation is actually equivalent to the permutability of all equivalence relations in a regular category. Because of this, the composition of two effective equivalence relations in $\crpogrp$ can not be given by the composition in $\cgrp$ and the isomorphism above does not preserve composition. 
\end{Remark}
\begin{Remark}
    Notice that because $\crpogrp$ is not a Barr-exact category (remark 2.6 in \cite{RPOgrpCatPersp}), we need to distinguish between two different joins. We could define a join between $\cEq{f}$ and $\cEq{g}$ as the equivalence relation $\cEq{f}\cup \cEq{f}\circ \cEq{g}\cup\cEq{f}\circ \cEq{g}\circ \cEq{f}\cup...$. This relation fails to be effective in general. To get an effective equivalence relation, we will consider the smallest effective equivalence relation that contains both $\cEq{f}$ and $\cEq{g}$. This is exactly the join we considered (\cite{CentrExtPOGrp}, lemma 2.4).  
\end{Remark}

Another property that the category $\crpogrp$ does not have is to be an ideal determined category (in the sense of \cite{IdealDeterminedCategory}), even though $\cgrp$ is. We recall the definition, and then give a counterexample. 

\begin{Definition}[Ideal determined category, \cite{IdealDeterminedCategory}]
    A pointed finitely complete and cocomplete category $\CC$ is said to be ideal determined if 
    
    \begin{itemize}
        \item There is pullback-stable factorization system (normal epi, mono), where "normal epi" means "cokernel of some morphism" ;
        \item for any commutative diagram 
        \begin{center}
            \begin{tikzcd}
                A\ar[r, "q"]\ar[d, "w"'] &B\ar[d, "v"]\\
                C\ar[r, "p"']&D,
            \end{tikzcd}
        \end{center}

        if $p,q$ are normal epimorphisms and $w$ is a normal monomorphism, then $v$ is a normal monomorphism. 
    \end{itemize}
\end{Definition}

The first point is valid, the factorization system (normal epi, mono) is actually exactly the factorization system (regular epi, mono) that we have, as for any quasivariety of universal algebras. The second point is not valid, for example, we could consider the right-preordered groups $(\Zz, \Nn)$, $(\Zz\times \Zz, \Delta(\Nn))$, where $\Delta(\Nn)$ is the diagonal of $\Nn\times \Nn$, and $(\Zz, 0)$. In the following diagram

\begin{equation}
\begin{tikzcd}
(\Zz,0)\ar[r, equal]\ar[d, monoreg->, "\ker{p_0}"']&(\Zz,0)\ar[d,>->]\\
(\Zz\times \Zz, \Delta(\Nn))\ar[r, -regepi, "p_1"]&(\Zz, \Nn),
\end{tikzcd}\label{EXG}
\end{equation}

the arrow on the right is the kernel of the first projection, but the second projection $p_1$, which is a normal epimorphisms, is the identity morphism when restricted to $(\Zz,0)$ the kernel of $p_0$. We get a monomorphism $(\Zz,0)\rightarrowtail(\Zz, \Nn)$ which is not a normal monomorphism because $0$ is not $\Zz\cap \Nn$. 

\subsection{Some general remarks on internal structures in $\crpogrp$}

Before diving into classification of some well-behaved internal structures, let us give some general results about internal categories.

\begin{Proposition}\label{uniquenessm}
    Given a reflexive graph 
    
    \begin{center}
    \begin{tikzcd}
       \RR=& (X_1, P_{1}) \arrow[-regepi, r, shift left=1.5ex, "d"]
      \arrow[-regepi, r, shift right=1.5ex, swap, "c"] \arrow[r, <-<, swap, "e" description]& (X_0, P_{0})
   \end{tikzcd} 
   \end{center}
   
   in $\crpogrp$, exists at most one morphism $m$ in $\crpogrp$ such that 
   
   \begin{center}
   \begin{tikzcd} (X_1\times_{X_0} X_1, P_1\times_{P_0}P_1)\arrow[-regepi, r, shift left=1.5ex, "p_0"]
      \arrow[-regepi,r, shift right=1.5ex, swap, "p_1"] \arrow[-regepi, r, swap, "m" description] &(X_1, P_1) \arrow[-regepi, r, shift left=1.5ex, "d"]
      \arrow[-regepi, r, shift right=1.5ex, swap, "c"] \arrow[r, <-<, swap, "e" description]& (X_0, P_0)
   \end{tikzcd}
   \end{center}
   
   is an internal category.    
\end{Proposition}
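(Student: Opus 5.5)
The simplest route is to reduce uniqueness to the category of groups through the faithful forgetful functor $\pym_0:\crpogrp\to\cgrp$. Suppose $m$ and $m'$ are two morphisms in $\crpogrp$, each making $\RR$ an internal category. I will show that the underlying group morphisms $m_0=\pym_0(m)$ and $m'_0=\pym_0(m')$ coincide. Since $\pym_0$ is faithful (it is topological), this gives $m=m'$. Note also that the restriction to positive cones is determined by the group part, because a morphism $(f,\overline f)$ has $\overline f$ equal to the restriction of $f$.

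First, I apply $\pym_0$ to the whole internal category diagram. Limits in $\crpogrp$ are computed on underlying groups, so $\pym_0$ preserves the pullback $X_1\times_{X_0}X_1$ and the projections $p_0,p_1$. The axioms of an internal category are all equations between composites of morphisms out of pullbacks: the source and target conditions on $m$, the unit laws involving $e$, and associativity over $X_1\times_{X_0}X_1\times_{X_0}X_1$. Each of these is preserved by the limit-preserving functor $\pym_0$. Hence $m_0$ and $m'_0$ are both composition maps making the underlying reflexive graph $X_1\rightrightarrows X_0$ an internal category in $\cgrp$.

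Next, I invoke the fact recalled in Section \ref{PXXModGrp}: $\cgrp$ is a Mal'tsev category, so being an internal category is a property of a reflexive graph and the multiplication is unique. To make this step self-contained, I would recall the concrete formula. For composable $(x,y)$ with $d(x)=c(y)$, where $x$ is taken as the first factor, the unit laws $m(x,ec(x))=x$ and $m(ed(y),y)=y$ together with $m$ being a homomorphism give
$$m(x,y)=m\big((x,ec(x))+(-ed(y),-ed(y))+(ed(y),y)\big)=x-ed(y)+y.$$
Here I use $c(x)=d(y)$ to check that each summand lies in the pullback. With this formula $m_0=m'_0$, which finishes the argument.

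The step I expect to need care is this unit-law decomposition. The paper's convention for which projection pairs with $d$ and which with $c$ must be fixed consistently, and one has to check that each summand really lies in $X_1\times_{X_0}X_1$. Otherwise the argument is formal: faithfulness and limit preservation of $\pym_0$, plus the unit laws. In particular, the relative modularity of Proposition \ref{modular} is not needed for bare uniqueness.
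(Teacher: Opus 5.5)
Your proof is correct and is essentially the paper's argument. The paper works with pairs $(a,b)$ satisfying $d(a)=c(b)$, writes $(a,b)=(ec(b),b)+(-ed(a),-ed(a))+(a,ed(a))$, and uses additivity of $m$ with the two unit laws to get $m(a,b)=b-ed(a)+a$; this is your three-summand decomposition with the pair written in the opposite order, and your passage through the faithful forgetful functor to groups only makes explicit where that computation takes place. Fix the slip you anticipated: your unit laws $m(x,ec(x))=x$ and $m(ed(y),y)=y$, and the membership checks for the summands, require $c(x)=d(y)$, not $d(x)=c(y)$ as you first wrote.
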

\begin{proof}
    Let $(a,b)\in X_1\times_{X_0}X_1$. Suppose $m$ is such a morphism. 
    \begin{align*}
        m(a,b)&=m(ec(b)-ec(b)+a, b)\\
        &=m(ec(b),b)+m(-ed(a)+a,0)\\
        &=b+m(-ed(a)+a, -ed(a)+ed(a))\\
        &=b-m(ed(a),ed(a))+m(a,ed(a))\\
        &=b-ed(a)+a\\
        &=b-ec(b)+a.
    \end{align*}
    Therefore we have uniqueness.  
\end{proof}

\begin{Remark}
The same results can also be viewed as a consequence of a result from M. Gran and D. Bourn in \cite{CatAspOfModula}, proposition 4.2 and 4.3. Even though the article from M.Gran and D.Bourn focuses on modularity of lattices of all equivalence relations over some object, uniqueness of the composition still holds in general if only lattices of kernel pairs over a given object are all modular (see the proofs in \cite{CatAspOfModula}).   
\end{Remark}

We can have a more precise idea of what $m$ looks like in general. Consider the fact that the reflexive graph $\RR$ is isomorphic to a reflexive graph of the form  

 \begin{center}
    \begin{tikzcd}
        (K\rtimes_\mu X_0, P) \arrow[-regepi, r, shift left=1.5ex, "p_1"]
      \arrow[-regepi, r, shift right=1.5ex, swap, "\partial p_0+p_1"] \arrow[r, <-<, swap, "i_1" description]& (X_0, P_{0})
   \end{tikzcd} 
   \end{center}

for some precrossed module of groups $(X_0,K,\mu,\partial)$(see section \ref{PXXModGrp}). The positive cone $P$ is simply the direct correspondence of $P_1$ under the group isomorphism $X_1\cong K\rtimes_\mu X_0$. Notice we do not ask $\mu$ or $\partial$ to respect in any way the preorder of the groups. Now the morphism $m$ can be written as $m((a,\partial(b)+x),(b,x))=(a+b,x)$. It is a well known result that $m$ is a group morphism exactly when the precrossed module $(X_0,K,\mu,\partial)$ is actually a crossed module, i.e. the Peiffer identity $\mu(\partial(k))(l)=klk\mm$, for all $k,l\in K$ holds (see section \ref{PXXModGrp}). Even then, it is not necessarily monotone, the monotonicity of $m$ is a property of the positive cone $P$ that will be further investigated in the context of Schreier reflexive graphs, in section \ref{Schreier}. 

Another remark is that in the category of groups, every internal category is a groupoid, as it is the case in any Mal'tsev category. This fails to be the case in the category $\crpogrp$. We will see in section \ref{examples} a variety of different internal categories  in $\crpogrp$ that do not necessarily have such good properties. Once again we are able to explicit the necessarily unique structure of groupoid over an internal category, suppose we have an internal groupoid, i.e. an internal category, together with a morphism $\sigma$

\begin{center}
        \begin{tikzcd}
    \CC=&|[alias=HL]|X_2&|[alias=HM]|X_1 &|[alias=HR]|X_0\\
    \arrow[->, from=HL, to=HM, "m"]  
    \arrow[->>, from=HM, to=HR, shift left=4, "d"]
    \arrow[->>, from=HM, to=HR, shift right=4, "c"']
    \arrow[>->, from=HR, to=HM, shift left=0, "e" description]
    \arrow[loop above, from=HM, "\sigma"]
        \end{tikzcd}
\end{center}
such that $m(a, \sigma(a))=ec(a)$ and $m(\sigma(a),a)=ed(a)$. Then, for any $a\in X_1$ as in the diagram above,

\begin{equation}\label{uniquenesss}    
\begin{split}
&m(\sigma(a),a)=ed(a)\\
    \lr&a-ec(a)+\sigma(a)=ed(a)\\ 
    \lr&\sigma(a)=ec(a)-a+ed(a).
\end{split}
\end{equation}

Therefore we have an explicit expression of the morphism $\sigma$. Moreover, as $X_1\cong \Ker{d}\rtimes_\mu X_0$, for some unspecified positive cone of the group $\Ker{d}\rtimes_\mu X_0$ and some fitting $\mu$, as seen above, 

\begin{align*}
    \sigma(a,b)&=i_1(cp_0+p_1)(a,b)-(a,b)+i_1p_1(a,b)\\
    &=(0,c(a)+b)+(\mu(-b)(-a), -b)+(0,b)\\
    &=(0,c(a)+b)+(\mu(-b)(-a),0)\\
    &=(\mu(c(a))(-a), c(a)+b)\\
    &=(-a, c(a)+b).
\end{align*}

this map is always a group morphism (as a consequence once again of $\cgrp$ being a Mal'tsev category). However, this map $\sigma$ fails to be monotone in general (see section \ref{Schreier}). Let us characterize an especially important class of groupoids, the class of effective equivalence relations. 

\begin{Proposition}\label{InternalEffective}
    Let $(G,P_G)$ be a right-preordered group. For any $b\in G$, define $P_{G}^b=\{x\in G\vert x-b\in P_G\}$. Then an internal equivalence relation (i.e. as below, with $G$ a normal subgroup of $X_0$, and $\mu$ is the action via conjugation)  
    
    \begin{center}
        \begin{tikzcd}
	    |[alias=HL]|(X_1, P_1)\times_{(X_0, P_0)} (X_1, P_1)&|[alias=HM]|(X_1=G\rtimes_\mu X_0, P_1) &|[alias=HR]|(X_0, P_0)\\
    \arrow[-regepi, from=HL, to=HM, "m"]  
    \arrow[-regepi, from=HM, to=HR, shift left=4, "p_1"]
    \arrow[-regepi, from=HM, to=HR, shift right=4, "p_0+p_1"']
    \arrow[>->, from=HR, to=HM, shift left=0, "i_1" description]
    \arrow[loop above, from=HM, "\sigma"]
        \end{tikzcd}
    \end{center}

    is effective if and only if $P_1=\bigcup_{b\in P_0}(P_0^{-b}\cap G)\times \{b\}$
\end{Proposition}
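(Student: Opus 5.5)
The plan is to make both sides of the equivalence explicit. We identify $X_1=G\rtimes_\mu X_0$ with a subgroup of $X_0\times X_0$ through the group isomorphism $\theta\colon(g,b)\mapsto(g+b,b)$. This sends $(g,b)$ to the pair $\bigl((p_0+p_1)(g,b),\,p_1(g,b)\bigr)$. Its inverse is $(y,x)\mapsto(y-x,x)$. The image of $\theta$ is the relation $R_G=\{(y,x)\mid y-x\in G\}$, which is the kernel pair in $\cgrp$ of the quotient $q\colon X_0\to X_0/G$. Transporting $P_1$ along $\theta$, the internal equivalence relation becomes the subobject $(R_G,\theta(P_1))$ of $(X_0,P_0)\times(X_0,P_0)$.

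The first step is to compute the positive cone of a kernel pair. Limits in $\crpogrp$ are computed in $\cgrp$ and given the limit preorder, so for any morphism $f=(f_0,\overline{f})$ with domain $(X_0,P_0)$ we have
\[
\cEq{f}=\bigl(\cEq{f_0},\ \cEq{f_0}\cap(P_0\times P_0)\bigr).
\]
This is also the content of Lemma \ref{effequivcomefromgrp}. Hence if the relation is effective, say equal to $\cEq{f}$, then necessarily $\cEq{f_0}=R_G$, so $\ker f_0=G$. Its positive cone is then forced to be $R_G\cap(P_0\times P_0)$, independently of the choice of $f$. Pulling this back along $\theta$: a pair $(g,b)$ lies in the cone exactly when $b\in P_0$ and $g+b\in P_0$. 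The condition $g+b\in P_0$ says $(g+b)-(-b)\in P_0$, that is $g\in P_0^{-b}$. With $g\in G$, this gives exactly $P_1=\bigcup_{b\in P_0}(P_0^{-b}\cap G)\times\{b\}$, which proves the ``only if'' direction.

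For the converse, suppose $P_1$ has this form. Take the morphism $q=(q,\overline{q})\colon(X_0,P_0)\to(X_0/G,q(P_0))$, where the codomain carries the image cone. This is a valid right-preordered group, since the image of a submonoid is a submonoid, and $q$ is monotone. By the first step, $\cEq{q}=(R_G,R_G\cap(P_0\times P_0))$. Under $\theta$ this corresponds exactly to $(G\rtimes_\mu X_0,P_1)$, compatibly with $p_1$, $p_0+p_1$ and $i_1$. So the given relation is the kernel pair of $q$ and is therefore effective.

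The only delicate points are bookkeeping. One is checking that $\theta$ is really an isomorphism of reflexive graphs, i.e. it intertwines $p_1$ and $p_0+p_1$ with the two projections; this uses that $\mu$ is conjugation, so the multiplication in $G\rtimes_\mu X_0$ matches that of $R_G$. The other is handling the sign convention in $P_0^{-b}$ correctly. I do not expect any conceptual obstacle beyond that; the key input is just the description of limits in $\crpogrp$.
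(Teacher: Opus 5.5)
Your proof is correct and takes essentially the same route as the paper. Both arguments rest on limits in $\crpogrp$ being computed componentwise, so that the cone of a kernel pair is $\cEq{f_0}\cap(P_0\times P_0)$, and both use the quotient $X_0\to X_0/G$ with image cone $q(P_0)$ to witness effectiveness. The differences are minor: you make the identification $\theta\colon(g,b)\mapsto(g+b,b)$ with $R_G\subseteq X_0\times X_0$ explicit, and for the ``only if'' direction you work with an arbitrary $f$ whose kernel pair is the relation rather than specifically with its coequalizer.
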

\begin{proof}
    The coequalizer of the arrows $(p_1, p_0+p_1)$ is the arrow $(f,\overline{f}=f_{\vert P_0})$ with codomain the group $X_0/G$ with positive cone the direct image $f(P_0)$ of $P_0$ under the quotient of $X_0$ by $G$. Because limits are computed componentwise in $\crpogrp$, the positive cone of a kernel pair of $(f,\overline{f})$ is the kernel pair of $\overline{f}$ in the category of monoids. Suppose $((X_1,P_1), p_1, p_0+p_1, i_0)$ is an effective equivalence relation over $(X_0, P_0)$ . We have the equivalences 
    \begin{align*}
         &(a,b)\in P_1\\
         \lr &f(b)=f(a+b)\text{ and } b\in P_0, a+b\in P_0\\
         \lr &f(a)=0  \text{ and } b\in P_0, a\in P_0^{-b}\\
         \lr & a\in P_0^{-b}\cap G, b\in P_0.
    \end{align*}
    Thus $P_1$ is the positive cone of an effective equivalence relation implies that $P_1=\bigcup_{b\in P_0}(P_0^{-b}\cap G)\times \{b\}$. This equivalence relation is the kernel pair of its coequalizer $((f,\overline{f}), (X_0/G, f(P_0)))$. This concludes the proof. 
\end{proof}

\section{Schreier-Crossed modules and Schreier action representability}\label{Schreier}

\subsection{Schreier split epimorphisms and internal Schreier actions}

The category $\crpogrp$ is not protomodular (\cite{RPOgrpCatPersp}). However, this is also the case for the category $\cmon$, but it has been shown that restricting our interest to a special class of split epimorphisms in the category $\cmon$, we can still get some weaker notion of protomodularity (see \cite{Schreiersplitepimorphismsbetweenmonoids.}). This class of split epimorphisms is the class of Schreier split epimorphisms. We recall the definition. 

\begin{Definition}[Schreier split epimorphisms in $\cmon$, \cite{Semidirectproductsandcrossedmodulesinmonoidswithoperations}]
    Let 
   \begin{tikzcd}
      K\arrow[>->, r, shift left=0.5ex]\arrow[r, <-, shift right=0.5ex, swap, dotted, "s"]& M \arrow[->>, r, shift left=0.5ex, "d"]
      \arrow[r, <-<, shift right=0.5ex, swap, "e"]& N
   \end{tikzcd} 
   be a split epimorphism in $\cmon$, where $K$ is the kernel of $d$. If exists a unique set theoretic map $s$ (depicted as the dotted arrow above) such that for all $m\in M$, $m=s(m)+ed(m)$, we say that the split extension is a Schreier split extensions, and the split epimorphism is said to be a Schreier split epimorphism. 
\end{Definition}

We adapt this definition to the context of right-preordered groups as the following :

\begin{Definition}[Schreier split epimorphism in $\crpogrp$]
   Let 
   \begin{tikzcd}
       (X_1, P_1) \arrow[->, >=regepi, r, shift left=0.5ex, "d"]
      \arrow[r, <-<, shift right=0.5ex, swap, "e"]& (X_0, P_0)
   \end{tikzcd} be a split epimorphism in $\crpogrp$. We will call such a split epimorphism a Schreier split epimorphism if the induced graph \begin{tikzcd}
       P_1 \arrow[->>, r, shift left=0.5ex, "\overline{d}"]
      \arrow[r, <-<, shift right=0.5ex, swap, "\overline{e}"]& P_0
   \end{tikzcd} over positive cones is a Schreier split epimorphism in $\cmon$.

   As our monoids are embedded in groups, we can relax the above condition into 
   \begin{align*}
       \forall a \in P_1, a-ed(a)\in P_1.
   \end{align*}
   We shall write $\sS$ for this class of split epimorphisms.
\end{Definition}

The main result about the class of Schreier split epimorphisms in $\cmon$ is that it allows us to define a suitable notion of protomodularity, relatively to this class of split epimorphisms (see \cite{Schreiersplitepimorphismsinmonoidsandinsemirings}, definition 8.1.1). We recall the definition of $S$-protomodularity in the context of a pointed category.

\begin{Definition}[$S$-protomodularity, \cite{Schreiersplitepimorphismsinmonoidsandinsemirings}]
    Let $\CC$ be a finitely complete pointed category. Let $S$ be a pullback stable class of split epimorphisms in $\CC$. We call a short split sequence $X$ a short $S$-split sequence if the split epimorphism of $X$ belongs to the class $S$. The category $\CC$ is $S$-protomodular if for any two short $S$-split exact sequences, 
    
\begin{center}
\begin{tikzcd}
      K\ar[monoreg->, r]\ar[d, "u"]&A\ar[-regepi, r, shift left=1 ex]\ar[d, "w"]&B\ar[d,"v"]\ar[l, >->, shift left=1]\\
      K'\ar[monoreg->, r]&A'\ar[-regepi, r, shift left=1 ex]&B'\ar[l, >->, shift left=1]
\end{tikzcd}
\end{center}

if $u$ and $v$ are isomorphisms, then so is $w$.
\end{Definition}

The main example of $S$-protomodular category is the category $\cmon$, for $S$ the class of Schreier split epimorphisms (see \cite{Schreiersplitepimorphismsinmonoidsandinsemirings}).

\subsection{Schreier semi-direct product in $\crpogrp$}\label{SemiDirect}

Let us describe Schreier split extensions in the category $\crpogrp$ for some fixed kernel and codomain. In view of this goal, we define, given a right-preordered group $(G,P_G)$, $$\adacc{G,P_G}_\sS:=(\Aut{G},\Aut{G}_\leq).\label{Autrpogrp}$$ $\Aut{G}$ here represents the group of group automorphisms, i.e. automorphisms that respect the group structure but need not preserve the preorder, and $\Aut{G}_\leq$ is the submonoid of group automorphisms that preserve the preorder (note that for $f\in \Aut{G}_\leq$, we do not require $f\mm$ to preserve the preorder).

Let us consider a morphism $\mu: (X_0, P_0)\longrightarrow \adacc{X_1,P_1}_\sS$. This morphism defines a semi-direct product $(X_1, P_{X_1})\rtimes_\mu(X_0, P_{X_0})$ over the set $X_1\times X_0$ such that the second projection and both injections are morphisms in $\crpogrp.$ Equivalently, the group structure of the semi-direct product of group $X_1\rtimes_\mu X_0$, becomes a right-preordered group with the positive cone $P_1\times P_0$.  This is the case because for all $(a,x), (b,y)$ in $P_1\times P_0, (a,x)+(b,y)=(a+\mu(x)(b), y+x)\in P_1\times P_0 $, since $\mu(x)$ is a morphism in $\crpogrp$ as long as $x\in P_1$. This definition of semi-direct product of right-preordered groups is motivated by the following  : 

\begin{center}
\begin{tikzcd}
    |[alias=L]|(X_1, P_1)&|[alias=M]|(X_1, P_1)\rtimes_\mu (X_0, P_0)&|[alias=R]|(X_0, P_0)\\
    \ar[monoreg->, >=angle 60,  from=L, to=M, "i_0"] \ar[>->, >=angle 60,  from=R, to=M, "i_1", shift left=1.5ex] \ar[-regepi,  from=M, to=R, "p_1", shift left=1.5ex]
\end{tikzcd}    
\end{center}

is a Schreier split extension in the category $\crpogrp$, indeed, for all $(a,b)\in (X_1, P_1)\rtimes_\mu (X_0, P_0)$, $(a,b)-(0, b)=(a,0)\in P_1\times P_0$.  Moreover, the converse also holds, and this is the content of the following lemma. 

\begin{Lemma}\label{Actrepresentability}
	Any Schreier split extension in $\crpogrp$ gives rise to a semi-direct product as described above.
\end{Lemma}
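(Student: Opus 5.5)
Starting from a Schreier split extension in $\crpogrp$
\begin{center}
\begin{tikzcd}
(K,P_K)\ar[monoreg->, r, "k"]&(X,P_X)\ar[-regepi, r, shift left=1ex, "d"]&(B,P_B)\ar[l, >->, shift left=1, "e"]
\end{tikzcd}
\end{center}
with $(K,P_K)$ the kernel of $d$, so $P_K=K\cap P_X$ by the description of kernels recalled above, I will apply the group case from section \ref{PXXModGrp}. Forgetting the preorders, $\phi:X\to K\rtimes_{\conj{e}}B$, $\phi(x)=(x-ed(x),d(x))$, is a group isomorphism with inverse $(a,b)\mapsto a+e(b)$, and it commutes with the kernel inclusions and with the splittings. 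The action is $\mu(b)=\conj{e(b)}_{\vert K}$, that is, $\mu(b)(a)=e(b)+a-e(b)$. Because $K$ is normal in $X$, this is a group morphism $B\to\Aut{K}$. It therefore remains to do two things: show that $\mu$ is a morphism $(B,P_B)\to\adacc{K,P_K}_\sS$, and show that $\phi$ is an isomorphism of right-preordered groups onto $(K,P_K)\rtimes_\mu(B,P_B)$, whose positive cone is $P_K\times P_B$.

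For monotonicity of $\mu$, take $b\in P_B$. I have to show that $\mu(b)$ maps $P_K$ into $P_K$. Let $a\in P_K$. Then $a+e(b)\in P_X$, because $e(b)\in P_X$ and $P_X$ is a submonoid. Apply the Schreier condition to this element: $a+e(b)-ed(a+e(b))\in P_X$. Since $d(a)=0$ and $de=\id{}$, this element is simply $a$, so this instance gives nothing new. The positive cone is not conjugation-stable in general, so the argument has to use the particular shape of the positive cone of $X$. I therefore intend to obtain the conjugate from a rewriting of the form $e(b)+a=(e(b)+a-e(b))+e(b)$. The Schreier condition applied to $e(b)+a\in P_X$ gives $e(b)+a-ed(e(b)+a)=e(b)+a-e(b)\in P_X$. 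This element lies in $K$ because $K$ is normal, so it lies in $P_K$. Hence $\mu(b)(P_K)\subseteq P_K$, which shows that $\mu$ lands in $\Aut{K}_\leq$ on $P_B$. This is the main step: the right monotonicity only gives closure under right translation, and the ordering $e(b)+a$ is what lets the Schreier condition produce the conjugate.

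For the positive cones, I need $\phi(P_X)=P_K\times P_B$. If $x\in P_X$, then $d(x)\in P_B$ because $d$ is monotone, and the Schreier condition gives $x-ed(x)\in P_X\cap K=P_K$. Conversely, if $a\in P_K$ and $b\in P_B$, then $\phi^{-1}(a,b)=a+e(b)\in P_X$ because $P_X$ is a submonoid. So $\phi$ and $\phi^{-1}$ are both monotone, and $\phi$ is an isomorphism in $\crpogrp$. Its compatibility with $i_0$, $i_1$ and $p_1$ is inherited from the group case. This gives an isomorphism of split extensions with the Schreier semi-direct product $(K,P_K)\rtimes_\mu(B,P_B)$, which is the statement of the lemma. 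As a by-product, the fact that the Schreier retraction $x\mapsto x-ed(x)$ is unique gives uniqueness of the decomposition.
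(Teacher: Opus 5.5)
Your proof is correct and follows the same idea as the paper: transport the extension along the group isomorphism $X\cong K\rtimes_\mu B$, then use the Schreier condition to force the transported positive cone to be exactly the product cone $P_K\times P_B$. The differences are minor. The paper gets the inclusion $P_K\times P_B\subseteq P$, together with an upper bound $P\subseteq P_{lex}$ that it does not actually need, by citing Clementino--Montoli, whereas you prove both inclusions directly; you also check explicitly that $\mu(b)$ preserves $P_K$ for $b\in P_B$, by applying the Schreier condition to $e(b)+a$, a point the paper leaves implicit.
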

\begin{proof}
 Consider a Schreier split extension in $\crpogrp$.  

\begin{center}
\begin{tikzcd}
    |[alias=L]|(K,P_K)&|[alias=M]|(G, P)&|[alias=R]|(X_0, P_0).\\
    \ar[monoreg->, >=angle 60,  from=L, to=M, "i"] \ar[>->, >=angle 60,  from=R, to=M, "j", shift left=1.5ex] \ar[-regepi,  from=M, to=R, "p", shift left=1.5ex]
\end{tikzcd}    
\end{center}

As groups, $G\cong K\rtimes_\mu X_0$, for some group action via automorphisms of groups $\mu$ (see section \ref{PXXModGrp}). The split extension in $\crpogrp$ is then isomorphic to 

\begin{center}
\begin{tikzcd}
    |[alias=L]|(K,P_K)&|[alias=M]|(K\rtimes_\mu X_0, P)&|[alias=R]|(X_0, P_0).\\
    \ar[monoreg->, >=angle 60,  from=L, to=M, "i_0"] \ar[>->, >=angle 60,  from=R, to=M, "i_1", shift left=1.5ex] \ar[-regepi,  from=M, to=R, "p_1", shift left=1.5ex]
\end{tikzcd}    
\end{center}

The positive cone $P$ also has to satisfy $P_1\times P_0=P_{prod}\subseteq P\subseteq P_{lex}=(P_1\times P_0^0)\cup (X_1\times (P_0\backslash P_0^0))$ (with $P_0^0:=\{a\in P_0 \vert -a\in P_0\}$) (\cite{RPOgrpCatPersp}, proposition 4.1). 

Under this considerations, being a Schreier point in the category of right-preordered groups is exactly the condition
$$
\forall (a,b)\in P, (a,0)\in P.
$$

This condition can only be satisfied if $P=P_{prod}$, and we fall back to the case of semi-direct products as described above. 
\end{proof}

We shall now call such a semi-direct product an $\sS\mhyphen$semi-direct product of right preordered groups.

Moreover, since pullbacks are computed in $\crpogrp$ by considering separately the pullback of groups and then the pullback of positive cones in $\cmon$, $\sS$ is stable under pullback. Therefore, we additionally proved $\sS$-protomodularity of $\crpogrp$, as one would expect.  

\subsection{Schreier action representability and Schreier centre}

Schreier extensions in the category $\cmon$ of monoids actually behave even better, in a way very similar to the way $\cgrp$ behaves. In order to formally express this, we give the following definition. 

\begin{Definition}[$S$-action representable category, \cite{AlgebraicexponentiationandactionrepresentabilityforV-groups}]
    Let $\CC$ be an $S$-protomodular pointed category. We say that $\CC$ is $S$-action representable if, for any object $X$ in $\CC$, the functor $S\mhyphen\cPt{\_,X}:\CC\rightarrow \cset$ sending an object $Y\in \CC$ to the set of isomorphisms classes of $S$-split extensions with codomain $Y$ and kernel $X$ is representable. 
\end{Definition}

We can give some examples of such categories :

\begin{itemize}
    \item The category $\cgrp$ if $S$ is the class of all split epimorphisms, then $S\mhyphen\cPt{Y,X}\cong \cgrp(Y, \Aut{X})$;
    \item The category $\cmon$ if $S$ is the class of Schreier split epimorphisms, $S\mhyphen\cPt{Y,X}\cong \cmon(Y, \End{X})$ (\cite{Schreiersplitepimorphismsinmonoidsandinsemirings}, section 5.);
    \item An entire class of examples closely related to right-preordered groups, the categories of $V\mhyphen$groups, for $V$ a cartesian quantale, has been investigated in \cite{AlgebraicexponentiationandactionrepresentabilityforV-groups} by M.M. Clementino and A. Montoli, where $S$ is the class of split epimorphisms whose domain has the product structure.
\end{itemize}

In our context, the assignement that will play the role of $\Aut{\_}$ for $\cgrp$ or $\End{\_}$ for $\cmon$ will be $\adacc{\_}_{\sS}$. We have 

\begin{Theorem}\label{SActRepr}
    In $\crpogrp$, $$\sS\mhyphen\cPt{\_,X}\cong \crpogrp(\_,\adacc{X}_{\sS}).$$
\end{Theorem}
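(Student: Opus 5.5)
We construct, for each right-preordered group $(Y,P_Y)$, a bijection
\[
\Phi_Y:\crpogrp\bigl((Y,P_Y),\adacc{X}_{\sS}\bigr)\longrightarrow \sS\mhyphen\cPt{Y,X}
\]
and then check that it is natural in $Y$. Throughout, write $X=(X,P_X)$.

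\textbf{Definition of $\Phi_Y$.} Let $\mu:(Y,P_Y)\to \adacc{X}_{\sS}=(\Aut{X},\Aut{X}_\leq)$ be a morphism. Then $\mu$ is a group action of $Y$ on $X$ by automorphisms, with $\mu(y)$ monotone whenever $y\in P_Y$. Section \ref{SemiDirect} shows that the semi-direct product $(X,P_X)\rtimes_\mu(Y,P_Y)$, with positive cone $P_X\times P_Y$, is a right-preordered group. The same section shows that $i_0,i_1,p_1$ are morphisms and that the split epimorphism $(p_1,i_1)$ lies in $\sS$. It remains to check that $i_0$ is the kernel of $p_1$ in $\crpogrp$. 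By the description of kernels recalled earlier, this amounts to $(X\times\{0\})\cap(P_X\times P_Y)=P_X\times\{0\}$, which is immediate. We set $\Phi_Y(\mu)$ to be the class of this split extension.

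\textbf{$\Phi_Y$ is surjective.} Take any Schreier split extension with kernel $(X,P_X)$ and codomain $(Y,P_Y)$. By Lemma \ref{Actrepresentability}, it is isomorphic, via an isomorphism that is the identity on kernel and codomain, to a semi-direct product $(X\rtimes_\mu Y,\,P_X\times P_Y)$ for some group action $\mu:Y\to\Aut{X}$. We must show that $\mu$ is a morphism of $\crpogrp$, i.e.\ that $\mu(y)\in\Aut{X}_\leq$ for $y\in P_Y$, i.e.\ that $\mu(y)(P_X)\subseteq P_X$.

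Here the identification is that $\mu(y)$ is conjugation by $i_1(y)$ inside $X\rtimes_\mu Y$. For $a\in P_X$, the element $(0,y)+(a,0)$ lies in $P_X\times P_Y$, being a sum of positives. Expanding it gives $(\mu(y)(a),y)$, so $\mu(y)(a)\in P_X$. Note that this uses only that the cone is closed under addition, together with the explicit form $(a,x)+(b,y)=(a+\mu(x)(b),\,x+y)$ of the group law. In particular, the failure of $P$ to be stable under conjugation causes no trouble, because we never need $-i_1(y)$ to be positive.

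\textbf{$\Phi_Y$ is injective.} Suppose $\mu$ and $\mu'$ give isomorphic split extensions, via an isomorphism $\phi$ that is the identity on kernel and codomain. Forgetting to $\cgrp$, $\phi$ is an isomorphism of group split extensions, and it must be $(a,y)\mapsto(a,y)$, since $(a,y)=i_0(a)+i_1(y)$. Comparing the products $(0,y)+(a,0)$ computed in the two semi-direct products then forces $\mu=\mu'$. So injectivity is inherited directly from the group case.

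\textbf{Naturality.} Let $g:(Y',P_{Y'})\to(Y,P_Y)$ be a morphism. The functor $\sS\mhyphen\cPt{\_,X}$ acts on $g$ by pullback along $g$. We show that pulling back $(X,P_X)\rtimes_\mu(Y,P_Y)$ along $g$ gives $(X,P_X)\rtimes_{\mu g}(Y',P_{Y'})$. At the level of groups this is the classical fact. For the cones, pullbacks in $\crpogrp$ are computed componentwise, so the pulled-back cone is $(P_X\times P_Y)\times_{P_Y}P_{Y'}\cong P_X\times P_{Y'}$. This matches $\Phi_{Y'}(\mu\circ g)$, which gives naturality.

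\textbf{Expected obstacle.} The delicate point is the surjectivity step: one must extract the positivity-preservation of $\mu(y)$ from the Schreier cone. Lemma \ref{Actrepresentability} is exactly what guarantees the cone is the product cone $P_X\times P_Y$, rather than something lying between $P_{prod}$ and $P_{lex}$, and with that in hand the computation above settles it. Everything else reduces to the corresponding facts for $\cgrp$.
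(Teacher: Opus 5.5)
Your proof is correct and follows essentially the paper's route: Lemma \ref{Actrepresentability} gives surjectivity, and injectivity comes from the fact that an isomorphism of split extensions fixing the kernel and the section must be the identity map, which forces $\mu=\mu'$. Your version is more complete than the paper's in three places the paper leaves implicit: you check that $i_0$ is the kernel, you show that $\mu(y)$ preserves $P_X$ for $y\in P_Y$ using the computation $(0,y)+(a,0)=(\mu(y)(a),y)$, and you prove naturality directly by computing the pullback cone instead of deducing it from uniqueness.
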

\begin{proof}
    Let $X$ be an object in $\crpogrp$. We will show that for any $\sS$-split short exact sequence $A$ with kernel $X$, exists a unique morphism of $\sS$-split short exact sequence from $A$ to the $\sS$-split short exact sequence 
    \begin{center}
    \begin{tikzcd}
        X\ar[monoreg->,r, "i_0"]&X\rtimes_{\id{{\adacc{X}_{\sS}}}} \adacc{X}_\sS\ar[r, -regepi, shift left=1, "p_1"]&\adacc{X}_\sS\ar[l, >->, shift left=1, "i_1"]
    \end{tikzcd}
    \end{center}    
where the semi-direct product in the middle is the semi-direct product from section \ref{SemiDirect}. 

Let us consider an object $Y$ in $\crpogrp$. We just proved in section \ref{SemiDirect} that any $\sS$-split short exact sequence of $\sS\mhyphen \cPt{Y,X}$ has as domain an $\sS\mhyphen$semi-direct product of $Y$ and $X$, and the converse is lemma \ref{Actrepresentability}. We just need to prove the uniqueness. Suppose we have two morphisms $\mu,\nu$ in $\crpogrp(Y,\adacc{X}_\sS)$ such that the induced $\sS$-split short exact sequences are isomorphic. In particular, the semi-direct products $X\rtimes_\mu Y$ and $X\rtimes_\nu Y$ are isomorphic. Let us write the isomorphism $\psi : X\rtimes_\mu Y\rightarrow X\rtimes_\nu Y$. By definition of morphisms of short exact sequences, the following diagram has to commute   

\begin{center}
\begin{tikzcd}
      X\ar[monoreg->, r, "i_0"]\ar[d, equal]&X\rtimes_\mu Y\ar[d, "\psi"]&Y\ar[d, equal]\ar[l, >->, "i_1"]\\
      X\ar[monoreg->, r, "i_0"]&X\rtimes_\nu Y&Y\ar[l, >->, "i_1"].
\end{tikzcd}
\end{center}

Set-theoretically, $\psi$ then has to be the identity, and thus it is also the identity in $\crpogrp$. From there for all $x\in X, y\in Y$, $\psi((\mu(y)(x),0_Y))=(\mu(y)(x),0_Y)=(\nu(y)(x),0_Y)$, thus $\mu=\nu$. This concludes the proof, since the naturality of the isomorphism of functors follows from the uniqueness. 
\end{proof}

From the concept of action representability, one can define some categorical notion of centre of an object (see \cite{ActionAccessibilityViaCentralizers}), for example, this construction in $\cgrp$ applied to a group $G$ produces the usual centre of the group $Z(G)\subseteq G$ of elements in $G$ that commute with any other element of $G$. We adapt the definition to the context of $S\mhyphen$action representability.

\begin{Definition}[$S$-Centre]
    Let $\CC$ be an $S$-action representable category, $X$ an object in $\CC$ and suppose that the indiscrete split extension

    \begin{center}
\begin{tikzcd}
      X\ar[monoreg->, r, "i_0"]&\nabla_X \ar[-regepi, r, "p_1", shift left=1]&X\ar[l, >->, "i_1", shift left=1]
\end{tikzcd}
\end{center}
belongs to $S$. If $S\mhyphen \cPt{\_,X}\cong \CC(\_,\adacc{X}_S)$, then the $S$-centre of $X$ is the kernel of the morphism $\mu:X\rightarrow \adacc{X}_S$ that corresponds to the above indiscrete split epimorphism via the isomorphism. We write $Z_S(X)$ for this object of $\CC$. 
\end{Definition}

Now we apply this definition to $\crpogrp$ and the class $\sS$ of Schreier split epimorphisms. Note that the condition about the indiscrete split extension belonging to $S$ is not as trivial as it might seem. Indeed, here, such a centre could not be described with this definition for all objects. The indiscrete relation is always effective in a pointed category, as the kernel pair of the terminal map. Considering this, we can use the description of effective equivalence relations from \ref{InternalEffective}. Consider a right-preordered group $G$ with positive cone $P_G$. The indiscrete point is 

    \begin{center}
\begin{tikzcd}
      G\ar[>->, r, "i_0"]&G\rtimes_\mu G \ar[->>, r, "p_1", shift left=1]&G\ar[l, >->, "i_1", shift left=1]\\
      P_G\ar[>->, r, "\overline{i_0}"]\ar[u,>->]&\bigcup_{g\in P_G}P_G^{-b}\times \{b\} \ar[u,>->]\ar[->>, r, "\overline{p_1}", shift left=1]&P_G\ar[u,>->]\ar[l, >->, "\overline{i_1}", shift left=1].
\end{tikzcd}
\end{center}

The semi-direct product is relative to the conjugation action of $G$ over itself.

This is an element of $\sS$ if and only if $\bigcup_{g\in P_G}P_G^{-g}\times \{g\}$ is the product preorder, i.e. $P_G\times P_G$. This is equivalent to asking that for all $g\in P_G$, $P_G^{-g}=P_G$, which means that for all $g\in P_G$, $x+g\in P_G$ if and only if $x\in P_G$. Now because $0=-g+g\in P_G$, we must have $-g\in P_G$, so $P_G$ has to be a group. Conversely if $P_G$ is a group, $x+g\in P_G$ with $g\in P_G$ implies $x\in P_G$, so $P_G^{-g}=P_G$. The centre $Z_\sS((G,P_G))$ can only be computed when $P_G$ is a group, and then the morphism $\mu: g\mapsto (h\mapsto g+h-g)$ has kernel the preordered abelian group $Z(G)$ (the usual group-theoretic centre of the group $G$), with positive cone $Z(P_G)$. One could observe that the objects in $\crpogrp$ for which there is a notion of centre are exactly the protomodular objects of $\crpogrp$ (see \cite{RPOgrpCatPersp}, theorem 3.4).

\subsection{Reflexive graphs and $\sS\mhyphen$precrossed modules}\label{RG}

From the relative version of protomodularity in $\cmon$, the theory of crossed module can actually be partially reconstructed (see \cite{Patchkoria}). We get a weaker notion of crossed module, that A. Patchkoria called crossed semi-module in \cite{Patchkoria}. In this section we will get some notion of $\sS\mhyphen$crossed module of right-preordered groups, based of the work of A. Patchkoria in \cite{Patchkoria}. To get a detailed understanding, we will first define a notion of relative precrossed modules, such that it forms a more algebraic category equivalent to the category of internal reflexive Schreier graphs. We begin with some definitions.

\begin{Definition}[Schreier internal reflexive graph]
    A reflexive graph
\begin{center} 
\begin{tikzpicture}[descr/.style={fill=white},text height=1.5ex, text depth=0.25ex]
\node (b) at (2.5,0) {$(Y_1, P_{Y_1})$};
\node (c) at (5,0) {$(Y_0, P_{Y_0})$};
\path[->,font=\scriptsize, >=angle 60]
([yshift= 9pt]b.east) edge[->, >=regepi] node[above] {$d$} ([yshift= 9pt]c.west)
(c.west) edge[>->] node[descr] {$e$} (b.east)
([yshift= -9pt]b.east) edge[->, >=regepi] node[below] {$c$} ([yshift= -9pt]c.west);
\end{tikzpicture}
\end{center}

in $\crpogrp$, will be said to be a Schreier reflexive graph if 

\begin{center} 
\begin{tikzpicture}[descr/.style={fill=white},text height=1.5ex, text depth=0.25ex]
\node (b) at (2.5,0) {$(Y_1, P_{Y_1})$};
\node (c) at (5,0) {$(Y_0, P_{Y_0})$};
\path[->,font=\scriptsize, >=angle 60]
([yshift= 5pt]b.east) edge[->, >=regepi] node[above] {$d$} ([yshift= 5pt]c.west)
([yshift= -5pt]c.west) edge[>->] node[below] {$e$} ([yshift= -5pt]b.east);
\end{tikzpicture}
\end{center}
   
   is a Schreier split epimorphism in $\crpogrp$. These reflexive graphs together with the usual morphisms define a category $\cSchRG{\crpogrp}$. We will write $((Y_0, P_{Y_0}), (Y_1, P_{Y_1}), d,c,e)$ to denote an object in $\cSchRG{\crpogrp}$. 
\end{Definition}

\begin{Definition}[$\sS\mhyphen$precrossed module in $\crpogrp$]
    An $\sS\mhyphen$precrossed module of right-preordered groups is the data of two right-preordered groups $X_0$ and $X_1$, with positive cones $P_0$ and $P_1$ respectively, together with monotone group morphisms $\partial:(X_1, P_1)\longrightarrow (X_0, P_0)$ and $\mu: (X_0, P_0)\longrightarrow \adacc{X_1,P_1}_\sS$. The following axiom must be satisfied 
    \begin{itemize}
        \item[\textbf{(PX)}] $\forall a\in X_1, \forall x\in X_0, \partial(\mu(x)(a))=x+\partial(a)-x.$
    \end{itemize}

    Morphisms between $\sS\mhyphen$precrossed modules of right-preordered groups $((X_0, P_0),(X_1, P_1),\partial,\mu)$ and $((X_0', P_0'),$ $(X_1', P_1'),\partial', \mu')$ are pairs $(\gamma_0, \gamma_1):(X_0\times X_1, P_0\times P_1)\longrightarrow(X'_0\times X'_1, P_0'\times P_1')$  of right-preordered group morphisms such that 
    \begin{itemize}
        \item $\gamma_1\partial=\partial'\gamma_0$,
        \item $\forall x\in X_0, \forall a\in X_1, \gamma_1(\mu(x)(a))=\mu'(\gamma_0(x))(\gamma_1(a)).$
    \end{itemize}

    The notion of $\sS\mhyphen$precrossed module of right-preordered groups together with morphisms of $\sS\mhyphen$precrossed modules of right-preordered groups defines a category $\cpxsmod{\crpogrp}$.
\end{Definition}

These two categories will be shown to be equivalent. To prepare this result, we define the functor $\fF$ from $\cSchRG{\crpogrp}$ to $\cpxsmod{\crpogrp}$ which associates with each reflexive graph 

\begin{center}
        \begin{tikzcd}
	   \RR=&|[alias=HM]|(X_1,P_1) &|[alias=HR]|(X_0, P_0)\\
    \arrow[-regepi, from=HM, to=HR, shift left=4, "d"]
    \arrow[-regepi, from=HM, to=HR, shift right=4, "c"']
    \arrow[>->, from=HR, to=HM, shift left=0, "e"]
        \end{tikzcd}
    \end{center}

the precrossed module $(\Ker{d},(X_0, P_0),  \mu_\RR , \partial_\RR)$, where for any $x$ in $\Ker{d}, y$ in $X_0$, $\partial_\RR(x)=c(x)$ and $ \mu_\RR(y)(x)=e(y)+x-e(y)$. With any morphism $(f_0,f_1)$ of internal reflexive graphs in $\crpogrp$ we simply associate the morphism $(f_0, f_1')$, with $f_1'$ the restriction of $f_1$ to $\Ker{d}$. We check that this functor is well-defined.

For an object $\RR=(Y_0,Y_1,d,c,e)$ in $\cSchRG{\crpogrp}$, she shall show that $\fF(\RR)$ is an object in $\cpxsmod{\crpogrp}$. The main point is that $\mu_\RR$ is a morphism $Y_0\rightarrow \adacc{\Ker{d}}_\sS$ in $\crpogrp$, as in (\ref{Autrpogrp}). It is a group morphism that satisfies the identity \textbf{(PX)}, indeed for any $x$ in $X_0$, $a$ in $\Ker{d}, c(\conj{e(x)}(a))=c(e(x)+a-e(x))=x+c(a)-x$. Moreover, if $x$ in $P_{Y_0}, \conj{e(x)}$ preserves the positive cone $\Ker{d}\cap P_{Y_1}$, indeed, if $a$ in $\Ker{d}\cap P_{Y_1}$, 
\begin{align*}
            \conj{e(x)}(a)&=e(x)+a-e(x)\\
            &=e(x)+a-ed(e(x)+a)\in P_{Y_1}\cap \Ker{d}.
\end{align*}
We used the fact that the underlying split epimorphism is a Schreier split epimorphism.  

Now let $(f_0,f_1)$ be a morphism in $\cSchRG{\crpogrp}$, with domain $\RR=(Y_0,Y_1,d,c,e)$ and codomain $\RR'=(Y_0', Y_1', d',c',e')$. The codomain of $f_1'$ is contained in $\Ker{d'}$ : if $a\in \Ker{d}, d'(f_1(a))=f_0(d(a))=0$ therefore $f_1'(a)\in \Ker{d'}$.  We also get the compatibility rule for morphisms of $\sS\mhyphen$precrossed module of right-preordered groups, $f_0c_{\vert \Ker{d}}=c'_{\vert \Ker{d'}}f_1$ and $f_1\conj{e(\_)}=\conj{f_1e(\_)}\circ f_1=\conj{e f_0(\_)}\circ f_1$ because these are compatibility rules for morphisms of reflexive graphs.

$\fF$ is therefore well defined as a mapping, and functoriality is immediate. We have the two following lemmas.

\begin{Lemma}
$\fF$ is essentially surjective.
\end{Lemma}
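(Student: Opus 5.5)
Given an $\sS$-precrossed module $((X_0,P_0),(X_1,P_1),\partial,\mu)$, I will exhibit a Schreier reflexive graph $\RR$ with $\fF(\RR)$ isomorphic to it; this mirrors the construction for groups recalled in section \ref{PXXModGrp}. Take as object of arrows the $\sS$-semi-direct product $(X_1,P_1)\rtimes_\mu(X_0,P_0)$ of section \ref{SemiDirect}, that is, the group $X_1\rtimes_\mu X_0$ with positive cone $P_1\times P_0$. This is a right-preordered group because $\mu$ lands in $\adacc{X_1,P_1}_\sS$, so $\mu(x)$ preserves $P_1$ for every $x\in P_0$. Set $d=p_1$, $e=i_1$ and $c=\partial p_0+p_1$, i.e.\ $c(a,x)=\partial(a)+x$.

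The checks go in this order.
\begin{itemize}
\item[(1)] The maps $d$ and $e$ are morphisms in $\crpogrp$ with $de=\id{}$. The split epimorphism $(d,e)$ is Schreier, since for $(a,x)\in P_1\times P_0$ we have $(a,x)-ed(a,x)=(a,0)\in P_1\times P_0$. This was already noted in section \ref{SemiDirect}.
\item[(2)] The map $c$ is a group morphism. This is the classical computation, and it uses exactly \textbf{(PX)}:
\[
c((a,x)+(b,y))=\partial(a)+\partial(\mu(x)(b))+x+y=\partial(a)+x+\partial(b)+y .
\]
It also satisfies $ce=\id{}$.
\item[(3)] The map $c$ is monotone: if $a\in P_1$ and $x\in P_0$, then $\partial(a)\in P_0$ because $\partial$ is monotone, and so $\partial(a)+x\in P_0$ because $P_0$ is a submonoid. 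Both $c$ and $d$ are surjective on positive cones (via $e$), so they are regular epimorphisms.
\end{itemize}
Hence $\RR$ is an object of $\cSchRG{\crpogrp}$.

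It remains to compute $\fF(\RR)$. Here $\Ker{d}=X_1\times\{0\}$, and its positive cone is $\Ker{d}\cap(P_1\times P_0)=P_1\times\{0\}$. Therefore $a\mapsto(a,0)$ is an isomorphism $(X_1,P_1)\cong\Ker{d}$ in $\crpogrp$. Under this isomorphism:
\begin{itemize}
\item $\partial_\RR(a,0)=c(a,0)=\partial(a)$;
\item $\mu_\RR(y)(a,0)=(0,y)+(a,0)-(0,y)=(\mu(y)(a),0)$.
\end{itemize}
So the pair $(\id{X_0},\,a\mapsto(a,0))$ is an isomorphism of $\sS$-precrossed modules. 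Its inverse is also a morphism in $\crpogrp$, because positive cones correspond exactly.

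The only step needing genuine care is the monotonicity of $c$ together with the identification of the positive cone of $\Ker{d}$. Both depend on having chosen the product cone $P_1\times P_0$. That choice is forced by Lemma \ref{Actrepresentability}, and it is what makes everything compatible. The remaining verifications are the standard group-theoretic ones.
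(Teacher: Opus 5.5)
Your proposal is correct and follows essentially the same route as the paper. The paper also takes the $\sS$-semi-direct product with the product cone and uses $d=p_1$, $c=\partial p_0+p_1$ and $e=i_1$. It cites section \ref{SemiDirect} for the Schreier condition and the monotonicity of $\partial$ for that of $c$; you additionally spell out the steps the paper calls immediate, namely the use of \textbf{(PX)} to make $c$ a group morphism and the identification of the image with the given precrossed module up to the isomorphism $a\mapsto(a,0)$.
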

\begin{Proof}
Given an $\sS\mhyphen$precrossed module of right-preordered groups $\XX=((K, P_K), (X, P_X), \mu, \partial)$, we just have to check that the reflexive graph 
\begin{center}
        \begin{tikzcd}
\RR=&|[alias=HM]|(K,P_K)\rtimes_\mu (X,P_X) &|[alias=HR]|(X_0, P_0)\\
    \arrow[-regepi, from=HM, to=HR, shift left=4, "p_1"]
    \arrow[-regepi, from=HM, to=HR, shift right=4, "\partial p_0+p_1"']
    \arrow[>->, from=HR, to=HM, shift left=0, "i_1"]
        \end{tikzcd}
    \end{center}

is a Schreier reflexive graph of right-preordered groups. Because $\partial$ is a morphism of right-preordered groups and that the positive cone of the $\sS\mhyphen$semi-direct product is endowed with the product positive cones $P_K\times P_X$, $\partial p_0+p_1$ is a morphism of right-preordered groups. We already proved in \ref{SemiDirect} that $\RR$ is a Schreier reflexive graph. It is immediate to check that $\fF(\RR)=\XX$.
\end{Proof}

\begin{Lemma}
$\fF$ is fully faithful.
\end{Lemma}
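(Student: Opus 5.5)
To prove that $\fF$ is fully faithful, we fix two Schreier reflexive graphs $\RR=((X_0,P_0),(X_1,P_1),d,c,e)$ and $\RR'=((X_0',P_0'),(X_1',P_1'),d',c',e')$. We show that $(f_0,f_1)\mapsto(f_0,f_1')$ is a bijection between $\cSchRG{\crpogrp}(\RR,\RR')$ and the corresponding hom-set of $\cpxsmod{\crpogrp}$.

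The first step is to reduce to semi-direct products. By Lemma \ref{Actrepresentability}, the underlying Schreier split extension of $\RR$ is isomorphic, in $\crpogrp$, to the $\sS$-semi-direct product $(\Ker{d},\Ker{d}\cap P_1)\rtimes_{\mu_\RR}(X_0,P_0)$. The comparison map is the usual one $x\mapsto (x-ed(x),d(x))$, with inverse $(k,y)\mapsto k+e(y)$. Its positive cone is the product cone $(\Ker{d}\cap P_1)\times P_0$. Under this isomorphism $c$ becomes $\partial_\RR p_0+p_1$, exactly as in the group case of section \ref{PXXModGrp}. Since isomorphic objects have isomorphic hom-sets compatibly with $\fF$, we may assume that both $\RR$ and $\RR'$ are of the form constructed in the proof of essential surjectivity.

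\textbf{Faithfulness.} Suppose $(f_0,f_1)$ and $(g_0,g_1)$ satisfy $f_0=g_0$ and $f_1'=g_1'$. For any $x\in X_1$ we write $x=(x-ed(x))+ed(x)$. Then
\[
f_1(x)=f_1'(x-ed(x))+e'f_0d(x)=g_1(x),
\]
using that $f_1e=e'f_0$. This is just the group-theoretic argument, and it suffices because the forgetful functor to $\cgrp$ is faithful.

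\textbf{Fullness.} This is the main point. Given a morphism $(\gamma_0,\gamma_1)$ of $\sS$-precrossed modules, we define
\[
f_1(k,y)=(\gamma_1(k),\gamma_0(y))
\]
on $\Ker{d}\rtimes_\mu X_0$, and set $f_0=\gamma_0$.
\begin{itemize}
\item $f_1$ is a group morphism by the compatibility $\gamma_1(\mu(y)(k))=\mu'(\gamma_0(y))(\gamma_1(k))$, as in the classical equivalence $\cRG{\cgrp}\simeq\cpxmod{\cgrp}$.
\item $f_1$ commutes with $p_1$, with $i_1$, and with $\partial p_0+p_1$, using $\gamma_0\partial=\partial'\gamma_1$.
\end{itemize}
The only new ingredient is monotonicity of $f_1$, and this is where the Schreier hypothesis is essential. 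Because both positive cones are product cones $P_K\times P_0$ and $P_{K'}\times P_0'$, monotonicity reduces to $\gamma_1(P_K)\subseteq P_{K'}$ and $\gamma_0(P_0)\subseteq P_0'$, and both hold since $\gamma_0$ and $\gamma_1$ are morphisms in $\crpogrp$. For a non-Schreier graph the positive cone could lie strictly between $P_{prod}$ and $P_{lex}$, and this step could fail. Finally, $f_1$ restricts to $\gamma_1$ on $\Ker{d}$, so $\fF(f_0,f_1)=(\gamma_0,\gamma_1)$, which proves fullness.
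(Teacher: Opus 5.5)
Your proof is correct and follows essentially the same route as the paper: transport both graphs along the canonical isomorphism $x\mapsto (x-ed(x),d(x))$ onto $\sS$-semi-direct products with product positive cones, then check that $\gamma_1\times\gamma_0$ is a morphism of the underlying reflexive graphs of groups, which is monotone because both cones are products. Your faithfulness argument via $x=(x-ed(x))+ed(x)$ and $f_1e=e'f_0$ is a more explicit version of the paper's one-line remark, and it does not even need the reduction to semi-direct products.
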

\begin{Proof}
We proved in \ref{SemiDirect} that any Schreier point in $\crpogrp$ is (up to isomorphism) a Schreier point that has an $\sS\mhyphen$semi-direct product as domain. Moreover the $\sS\mhyphen$semi-direct product involves only the objects $(X_0, P_0)$ and $\Ker{d}$ (if we consider the same notations as above), thus $\fF$ is necessarily faithful. It is also full, to see this, we consider $\RR=(X_0, X_1, d,c,e)$, $\RR'=(X_0', X_1', d',c',e')$ two Schreier reflexive graphs in $\crpogrp$ and their respective image through $\fF$. Consider any morphism in $\cpxsmod{\crpogrp}(\fF(\RR), \fF(\RR'))$, say $(g_0, g_1)$. We know that exists an isomorphism of right-preordered groups $\psi: X_1\rightarrow \Ker{d}\rtimes_{\conj{e}}X_0$ that extends to some isomorphism of $\sS\mhyphen$short exact sequences (\ref{SemiDirect}), giving it the structure of $\sS\mhyphen$reflexive graph by adding a morphism $\gamma=c\psi\mm : \Ker{d}\rtimes_{\conj{e}}X_0\rightarrow X_0$, we get an isomorphism of Schreier reflexive graph, as follows 

\begin{center}
        \begin{tikzcd}
|[alias=HM]|X_1 \ar[d, "\psi"']&|[alias=HR]|X_0\ar[d, equal]\\    
|[alias=HMM]|\Ker{d}\rtimes_{\conj{e}}X_0 &|[alias=HRR]|X_0\\
\arrow[->>, from=HM, to=HR, shift right=4, "c" description]
    \arrow[->>, from=HM, to=HR, shift left=4, "d"]
    \arrow[>->, from=HR, to=HM, shift left=0, "e" description]
    \arrow[->>, from=HMM, to=HRR, shift left=4, "p_1" description]
    \arrow[->>, from=HMM, to=HRR, shift right=4, "c\psi\mm"']
    \arrow[>->, from=HRR, to=HMM, shift left=0, "i_1" description]
        \end{tikzcd}
\end{center}

Doing the same for $\RR'$, we get another isomorphism of Schreier reflexive graphs $\psi'$. 
We will now just have to prove that $((\psi')\mm(g_1\times g_0)\psi, g_0)$ is a morphism of Schreier reflexive graphs, and then we would immediately get $(g_0,g_1)=\fF((\psi')\mm(g_1\times g_0)\psi, g_0)$, thus fullness of $\fF$. Because we know $\psi,\psi'$ are isomorphisms in $\cSchRG{\crpogrp}$, it is enough to check that the following diagram is a morphism in $\cSchRG{\crpogrp}$. 
\begin{center}
        \begin{tikzcd}
|[alias=HM]| \Ker{d}\rtimes_{\conj{e}}X_0\ar[d, "g_1\times g_0"']&|[alias=HR]|X_0\ar[d, "g_0"]\\    
|[alias=HMM]|\Ker{d'}\rtimes_{\conj{e'}}X_0' &|[alias=HRR]|X_0'\\
\arrow[->>, from=HM, to=HR, shift right=4, "c\psi\mm"', near start]
    \arrow[->>, from=HM, to=HR, shift left=4, "p_1"]
    \arrow[>->, from=HR, to=HM, shift left=0, "i_1" description]
    \arrow[->>, from=HMM, to=HRR, shift left=4, "p_1" description]
    \arrow[->>, from=HMM, to=HRR, shift right=4, "c'(\psi')\mm"']
    \arrow[>->, from=HRR, to=HMM, shift left=0, "i_1" description]
        \end{tikzcd}
\end{center}

referring back to the figure \ref{equivalencemorphismsRG}, this diagram is a morphism of reflexive graphs of groups. Because the domain of this reflexive graph of groups has as positive cone the direct product and $g_1,g_0$ are morphisms of right-preordered groups, the diagram above is a morphism of Schreier reflexive graphs of right-preordered groups, thus the result. 
\end{Proof}

We can now state the equivalence.

\begin{Theorem}
There is an equivalence of categories $$\cSchRG{\crpogrp}\cong\sS\mhyphen\cpxmod{\crpogrp}.$$
\end{Theorem}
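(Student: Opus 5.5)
The equivalence follows from the two lemmas just proved. The functor $\fF:\cSchRG{\crpogrp}\to\cpxsmod{\crpogrp}$ is well defined and functorial. By the preceding lemmas it is essentially surjective and fully faithful. The standard characterization of equivalences (a functor is an equivalence if and only if it is fully faithful and essentially surjective, assuming choice to pick preimages) then gives the result.

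We can also describe a quasi-inverse $\Gg$ explicitly, as in the group case. To an $\sS\mhyphen$precrossed module $((X_0,P_0),(X_1,P_1),\partial,\mu)$ it assigns the reflexive graph
\[
(X_1,P_1)\rtimes_\mu(X_0,P_0)\rightrightarrows (X_0,P_0)
\]
with maps $p_1$ and $\partial p_0+p_1$ and splitting $i_1$. Here the semi-direct product is the $\sS\mhyphen$semi-direct product of Section \ref{SemiDirect}, with positive cone $P_1\times P_0$. To a morphism $(\gamma_0,\gamma_1)$ it assigns $(\gamma_0,\gamma_1\times\gamma_0)$. This is a morphism of right-preordered groups because the cones are products and $\gamma_0,\gamma_1$ are monotone. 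The group-level computations, namely that $\gamma_1\times\gamma_0$ is a group morphism commuting with the structure maps, are exactly those of diagram (\ref{equivalencemorphismsRG}).

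Next we need the two natural isomorphisms. $\fF\Gg$ is equal, not merely isomorphic, to the identity: the kernel of $p_1$ is $X_1\times\{0\}\cong X_1$ with cone $P_1$; $\partial p_0+p_1$ restricts to $\partial$; and conjugation by $i_1(x)$ on $(a,0)$ gives $(\mu(x)(a),0)$. For $\Gg\fF$ we use the isomorphism $\psi:X_1\to\Ker{d}\rtimes_{\conj{e}}X_0$, $\psi(a)=(a-ed(a),d(a))$. It is a group isomorphism by Section \ref{PXXModGrp}. By Lemma \ref{Actrepresentability} it is also an isomorphism of right-preordered groups, because the Schreier condition forces the transported cone to be the product cone.

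The main obstacle is that $\psi$ intertwines the codomain maps, i.e.\ $c=(c_{\vert\Ker{d}}p_0+p_1)\psi$. This holds because
\[
c(a-ed(a))+d(a)=c(a)-d(a)+d(a)=c(a),
\]
using $ce=\id{}$. Naturality of $\psi$ in $\RR$ is a direct check from $f_1e=e'f_0$ and $d'f_1=f_0d$. Together these give $\Gg\fF\cong\id{}$, which proves the equivalence.
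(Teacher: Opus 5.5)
Your proposal is correct and follows the paper's route: the theorem is obtained by combining the two preceding lemmas, essential surjectivity and full faithfulness of $\fF$. Your explicit quasi-inverse and the natural isomorphism $\psi(a)=(a-ed(a),d(a))$ are the constructions already used in the proofs of those lemmas, repackaged so that the equivalence is exhibited directly. One small wording point: the composite with $\fF$ is the identity only up to the canonical identification $X_1\cong X_1\times\{0\}$, not a literal equality as you claim.
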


Now that this theorem is established, we give the definition of $\sS$-crossed modules in the category $\crpogrp$, and prove that a reflexive graph in $\crpogrp$ is an internal category if and only if its image through the functor $\fF$ is an $\sS\mhyphen$crossed module of right-preordered groups.

\begin{Definition}[$\sS\mhyphen$Crossed module in $\crpogrp$]
   An $\sS\mhyphen$crossed module is an $\sS\mhyphen$precrossed module $(X_1,X_0,\mu,\partial)$ such that the following Peiffer axiom \textbf{(P)} is satisfied :
 
       $$\textbf{(P)}\quad \forall a,b\in X_1, \mu(\partial(a))(b)=a+b-a.$$
  
  We write $\cxsmod{\crpogrp}$ for the full subcategory of $\cpxsmod{\crpogrp}$ where objects are $\sS\mhyphen$crossed modules.
\end{Definition}

\begin{Theorem}
    We have an equivalence of category 
    \begin{center}
        \begin{align*}
            \cxsmod{\crpogrp}\cong \cSchCat{\crpogrp}.
        \end{align*}
    \end{center}
\end{Theorem}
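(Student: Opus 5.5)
The equivalence $\cSchRG{\crpogrp}\cong\cpxsmod{\crpogrp}$ of the previous theorem should restrict to the two full subcategories. Both $\cxsmod{\crpogrp}$ and $\cSchCat{\crpogrp}$ are full: morphisms of internal categories are just morphisms of the underlying reflexive graphs, since compatibility with $m$ follows from the uniqueness of $m$ in Proposition \ref{uniquenessm}. It therefore suffices to show that a Schreier reflexive graph $\RR$ admits an internal category structure if and only if $\fF(\RR)$ satisfies \textbf{(P)}.

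Since the property is invariant under isomorphism of reflexive graphs, I would replace $\RR$ by the isomorphic graph
\[(K,P_K)\rtimes_\mu(X_0,P_0)\rightrightarrows(X_0,P_0),\]
with $d=p_1$, $c=\partial p_0+p_1$ and $e=i_1$, as in the proof of fullness. Its positive cone is the product cone $P_K\times P_0$ by Lemma \ref{Actrepresentability}. By Proposition \ref{uniquenessm} and the remark after it, the only candidate composition is
\[m((a,\partial(b)+x),(b,x))=(a+b,x).\]
The classical result recalled in section \ref{PXXModGrp} says that $m$ is a group morphism, and then satisfies the category axioms at the level of groups, if and only if \textbf{(P)} holds. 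The necessity of \textbf{(P)} is then immediate, since any internal category in $\crpogrp$ is in particular one in $\cgrp$ via the forgetful functor $\pym_0$, which preserves limits.

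For sufficiency, the remaining point is monotonicity of $m$. The pullback $X_1\times_{X_0}X_1$ has as positive cone the pullback of monoids $P_1\times_{P_0}P_1$. A positive element is therefore a pair $((a,\partial(b)+x),(b,x))$ with $a,b\in P_K$, $x\in P_0$ and $\partial(b)+x\in P_0$. Its image $(a+b,x)$ lies in $P_K\times P_0$ because $P_K$ is a submonoid. Hence $m$ is monotone with no extra condition, and this is exactly where the Schreier hypothesis (the product cone) is used. The identity and associativity laws hold because they hold in $\cgrp$ and $\pym_0$ is faithful. Monotonicity of $d$, $c$ and $e$ was already established in the proof of essential surjectivity.

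The main obstacle I expect is bookkeeping rather than substance. One has to check that the transport along $\psi$ preserves the internal category structure, and that the convention for the pullback (which of $d$ and $c$ is glued) matches the formula for $m$. If the order is reversed, I would instead use $m=b-ec(b)+a$ from Proposition \ref{uniquenessm} directly on $X_1$. Positivity then follows from the Schreier condition, because $b-ed(b)\in P_1$ and positive elements of the pullback satisfy $c(b)=d(a)$ or the symmetric relation. Finally, I would restrict the functor $\fF$ and its pseudo-inverse from the previous theorem, noting that they send objects of each subcategory into the other.
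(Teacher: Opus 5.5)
Your proposal is correct and is essentially the paper's proof. You restrict the equivalence of the previous theorem, use that the candidate composition is a group morphism exactly when \textbf{(P)} holds, and get monotonicity of $m((a,\partial(b)+x),(b,x))=(a+b,x)$ from the product cone $P_K\times P_0$; the only small difference is that you obtain necessity through the limit-preserving forgetful functor to groups, where the paper computes directly from $m(ec(a),a)=a=m(a,0)$.

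Your fallback for a reversed pullback convention is unnecessary, and its justification does not work. In $m(u,v)=v-ec(v)+u$, neither $v-ec(v)$ nor $-ed(u)+u$ need be positive (for $v=(b,x)$ one gets $v-ec(v)=(b,-\partial(b))$), and the Schreier condition only controls $w-ed(w)$. This costs nothing, though: the semi-direct product computation already gives $(a+b,x)$ or $(b+a,x)$, both in $P_K\times P_0$, under either convention.
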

\begin{proof}
    We will prove that, under the previous equivalence $$\cpxsmod{\crpogrp}\cong\cSchRG{\crpogrp},$$ the two full subcategories $\cxsmod{\crpogrp}$ and $\cSchCat{\crpogrp}$ exactly correspond. 
    Suppose first $\CC=(X_1,X_0,d,c,e,m)$ is an internal category. The Peiffer identity holds for the $\sS\mhyphen$precrossed module $(\Ker{d},X_0,\mu_\CC,\partial_\CC)$. Indeed, for all $a,b\in \Ker{d}$, since $m$ is a morphism, and $m(ec(a),a)=a=m(a,0)$ holds, 
    \begin{align*}
        \mu_\CC(\partial_\CC(a))(b)&=ec(a)+b-ec(a)\\
        &=m(ec(a)+b-ec(a),0)\\
        &=m(ec(a),0)+m(b,0)-m(ec(a),0)\\
        &=m(ec(a),a)+m(b,0)-m(ec(a),a)\\
        &=a+b-a
    \end{align*}

    Conversely, if $(X_1,X_0,\mu,\partial)$ satisfies $\textbf{(P)}$, then the unique $m$ defined in \ref{uniquenessm} is a mophism in $\crpogrp$. Indeed, with the Peiffer identity, we know that $m$ becomes a group morphism. Monotonicity comes from the expression $m((x,\partial(y)+a),(y,a))=(x+y,a)$, because the positive cone of $X_1\rtimes_\mu X_0$ is $P_{X_0}\times P_{X_1}$.  
\end{proof}

Lastly, we can characterize the Schreier internal categories that are actually groupoids. We get the following two corresponding categories :
\begin{itemize}
    \item $\cSchGrpd{\crpogrp}$ is the category whose objects are internal Schreier categories having a groupoid structure, written as $(X_0,X_1,d,c,e,m,s)$. It is a full subcategory of $\cSchCat{\crpogrp}$.
    \item $\cxsmodp{\crpogrp}$ is the full subcategory of $\cxsmod{\crpogrp}$ such that for every object $(X_1,X_0,\mu,\partial)$, $P_{X_1}$ is a group. 
\end{itemize}

We check that these categories correspond to each other via the equivalence. 

If $(X_0,X_1,\mu,\partial)$ is an object in $\cxsmodp{\crpogrp}$, then the positive cone $P_{X_1}\times P_{X_0}$ is stable under the groupoid operation $s:(a,b)\mapsto (-a, \partial(a)+b)$ defined in section \ref{uniquenesss}. Conversely if such an operation $s$ is a morphism in $\crpogrp$, in some Schreier internal category $\CC=(X_0,X_1,d,c,e,m)$, then because of the fact that $\CC$ is a Schreier internal category, we have an isomorphism with the internal category such that its "object of morphisms" is the semi-direct product $\Ker{d}\rtimes_{\mu_\CC} X_0$. Since $s$ is supposed monotone, for all $(a,x)\in (\Ker{d}\cap P_{X_1})\times P_{X_0}, (-a,c(a)+x)\in  (\Ker{d}\cap P_{X_1})\times P_{X_0}$. In particular, $-a\in \Ker{d}\cap P_{X_1}$ for all $a\in \Ker{d}\cap P_{X_1}$. 

 Now consider in an $\sS$-reflexive graph in $\crpogrp$

\begin{center}
        \begin{tikzcd}
    \RR=&|[alias=HM]|K\rtimes_\mu X &|[alias=HR]|X,\\
    \arrow[-regepi, from=HM, to=HR, shift left=4, "p_1"]
    \arrow[-regepi, from=HM, to=HR, shift right=4, "\partial p_0+p_1"']
    \arrow[>->, from=HR, to=HM, shift left=0, "i_1"]
        \end{tikzcd}
\end{center}
the two subobjects $(K, P_K)=\ker(p_1)$ and $(L,P_L):=(\{(a,-\partial(a))\vert a\in K\}, L\cap (P_K\times P_X))=\ker(\partial p_0+p_1)$ of $K\rtimes_\mu X$. If $(K,P_K)$ and $(L, P_L)$ commute in the sense of Huq, then as groups $K$ and $L$ commute in the sense of Huq, and the arrow of the property is the addition. The addition is always monotone therefore $(K,P_K)$ and $(L,P_L)$ commute in the sense of Huq as right-preordered groups exactly when $K$ and $L$ commute in the sense of Huq as groups, which is exactly when the Peiffer identity holds over the group reflexive graph $\pym_0(\RR)$, which lastly is exactly when $\RR$ is an internal category in $\crpogrp$. This observation actually leads to a relative version of the Smith-is-Huq property. 

We can restrict our attention to $\sS$-effective equivalence relations (effective equivalence relations such that both underlying split epimorphisms belong to $\sS$) and check if the following $\sS$-Smith-is-Huq property holds. 

\begin{Definition}[$S$-Smith-is-Huq property, \cite{OntheSmithisHuqconditioninS-protomodularcategories}]
    Let $\CC$ be a pointed $S$-protomodular category, and $S$ a class of split epimorphisms in $\CC$ that is closed under composition and contains all product projections
\begin{center}
 \begin{tikzcd}
        X\times Y \ar[r, -regepi, shift left=2, "p_1"]&Y.\ar[shift left=2, l, "i_1", >->]
    \end{tikzcd}
\end{center}

We say that $\CC$ satisfies the \textit{$S$-Smith-is-Huq property} if any pair of $S$-effective equivalence relations commute in the sense of Smith exactly when their normalization commute in the sense of Huq.
\end{Definition}

Before wondering if $\crpogrp$ satisfies the $\sS\mhyphen$Smith-is-Huq property, we give a counterexample to insure $\crpogrp$ does not satisfy the Smith-is-Huq property for all equivalence relations.

\begin{Example}\label{NotSmithisHuq}

Consider the indiscrete relation $\nabla=(\Zz^2, \Nn^2)$ over the right-preordered group $(\Zz, \Nn)$. $\nabla$ does not centralize itself in the sense of Smith. Indeed, consider the diagram 

 \begin{center}
        \begin{tikzcd}
            (\Zz^3, \Nn^3) \ar[shift left=2, "p_{1\mathpunct{,}2}", r, -regepi]\ar[shift left=2, "p_{0\mathpunct{,}1}", d, -regepi]&\nabla\ar[shift left=2, "i_{1\mathpunct{,}2}", l, >->]\ar[d, -regepi, "p_0", shift left=2]\\
            \nabla\ar[shift left=2, "p_{1}", r, -regepi]\ar[u, >->, "i_{0\mathpunct{,}1}", shift left=2]&(\Zz, \Nn)\ar[l, >->, "\Delta", shift left=2]\ar[u, >->, "\Delta", shift left=2]
        \end{tikzcd}
    \end{center}

A connector $p$ satisfying $pi_{1,2}=p_1: \nabla\rightarrow (\Zz,\Nn)$ and $pi_{0,1}=p_0: \nabla\rightarrow (\Zz,\Nn)$ would have to satisfy the Mal'tsev axioms $p(x,y,y)=x$ and $p(x,x,y)=y$. Suppose such a connector exists in $\crpogrp$, then we would have, for any $y \in \Zz$,
\begin{align*}
p(0,y,0)&=p(0,y,y)-p(0,0,y)\\
&=0-y=-y.
\end{align*}

From there we can easily check that the uniquely determined connector $p$ we get cannot be monotone, as for example $(0,1,0)\in \Nn^3$ but $p(0,1,0)=-1\not\in\Nn$.

However, the normalization of $\nabla$ is $(\Zz, \Nn)$, the object itself, and one can check that it commutes in the sense of Huq with itself as a subobject of itself, which is made clearer with the following diagram :

 \begin{center}
        \begin{tikzcd}
            (\Zz, \Nn)\ar[>->, "i_0", r]\ar[equal, dr]&(\Zz^2,\Nn^2)\ar[d, "+" ]&(\Zz,\Nn) \ar[>->, "i_1"', l]\ar[equal, dl]\\
            &(\Zz, \Nn).&
        \end{tikzcd}
   \end{center}
The two triangles commute due to the fact that every monoid and group involved is commutative, turning $+$ into a morphism in $\crpogrp$. One could also refer to proposition 3.6. in \cite{CentrExtPOGrp} for a more general statement, that actually any right-preordered (and thus preordered) abelian group commutes with itself in the sense of Huq.

We now go back to the $\sS\mhyphen$Smith-is-Huq property. It was proved in proposition 2.3.2., \cite{Schreiersplitepimorphismsinmonoidsandinsemirings}, that in the category $\cmon$ Schreier split epimorphisms are closed under composition. It follows immediately that $\sS$ is closed under composition in $\crpogrp$. $\sS$ also contains all product projections, we can think of them as being $\sS\mhyphen$semi-direct products with trivial action. Applying theorem 4.2 of \cite{OntheSmithisHuqconditioninS-protomodularcategories}, we immediately get the $\sS$-Smith-is-Huq property for the category $\crpogrp$.
\end{Example}

\section{The diversity of internal categories in $\crpogrp$}

We conclude the article by presenting some examples showing the independence  of various internal structures in the category of right-preordered groups.\label{examples}

\begin{Example}[Schreier internal category that is not a groupoid]
Consider the internal category 
     \begin{center}
        \begin{tikzcd}
    |[alias=HL]|\Zz&|[alias=HM]|\Zz\times \Zz &|[alias=HR]|\Zz\\
    &&\\
    |[alias=LL]| \Nn &|[alias=LM]|\Nn\times \Zz&|[alias=LR]|\Zz\\ 
    \arrow[>->, from=LL, to=HL]
    \arrow[>->, from=HL, to=HM, "i_0"]
    \arrow[>->, from=LL, to=LM, "\overline{i_0}"]   
    \arrow[>->, from=LR, to=HR] 
    \arrow[>->, from=LM, to=HM]
    \arrow[->>, from=LM, to=LR, shift left=4, "\overline{p_1}"]
    \arrow[->>, from=LM, to=LR, shift right=5, "\overline{p_0+p_1}"']
    \arrow[->>, from=HM, to=HR, shift left=4, "p_1"]
    \arrow[->>, from=HM, to=HR, shift right=4, "p_0+p_1"']
    \arrow[>->, from=LR, to=LM, shift left=0, "\overline{i_1}" description]
    \arrow[>->, from=HR, to=HM, shift left=0, "i_1" description]
        \end{tikzcd}
    \end{center}
$\Nn$ is not a group so $\sigma$ is not a morphism in $\crpogrp$, but if $(a,b)\in \Nn\times \Zz, (a,b)-(0,b)=(a,0)\in \overline{i_0}(\Nn)$. 
\end{Example}

\begin{Example}[Groupoid that is not a Schreier internal category]
Consider the following internal category, where $C^2=\{\pm 1\}$, $\Qq^*_{|\cdot |\leq 1 }=\Qq^*\cap [-1,1]$, $\Qq^*_{|\cdot |< 1 }=\Qq^*\cap ]-1,1[$ :
     \begin{center}
        \begin{tikzcd}
    |[alias=HL]|\Rr^*&|[alias=HM]|\Rr^*\times \Rr^* &|[alias=HR]|\Rr^*\\
    &&\\
    |[alias=LL]| C^2 &|[alias=LM]| (C^2\times \Qq^*_{|\cdot |\leq 1 })\cup(\Qq^*\backslash C^2\times \Qq^*_{|\cdot |< 1 })&|[alias=LR]|\Qq^*_{|\cdot |\leq 1 }\\ 
    \arrow[>->, from=LL, to=HL]
    \arrow[>->, from=HL, to=HM, "i_0"]
    \arrow[>->, from=LL, to=LM, "\overline{i_0}"]   
    \arrow[>->, from=LR, to=HR] 
    \arrow[>->, from=LM, to=HM]
    \arrow[->>, from=LM, to=LR, shift left=4, "\overline{p_1}"]
    \arrow[->>, from=LM, to=LR, shift right=5, "\overline{p_1}"']
    \arrow[->>, from=HM, to=HR, shift left=4, "p_1"]
    \arrow[->>, from=HM, to=HR, shift right=4, "p_1"']
    \arrow[>->, from=LR, to=LM, shift left=0, "\overline{i_1}"description]
    \arrow[>->, from=HR, to=HM, shift left=0, "i_1"description]
        \end{tikzcd}
    \end{center}

The morphism $\sigma$ here is $(a,b)\mapsto (a\mm, b)$ and the positive cone is preserved under it. However, for example $(5,1/2) - (1, 1/2)=(5,1)\not\in \overline{i_0}(C^2).$ Notice we use the additive notation to be coherent with the rest of the article even though it represents the usual multiplication of rational numbers.
It is useful to check that the positive cone $P:=C^2\times \Qq^*_{|\cdot |\leq 1 }\cup \Qq^*\backslash C^2\times \Qq^*_{|\cdot |< 1 }$ is indeed a monoid. 
\begin{itemize}
    \item $P$ has a neutral element $(1,1)\in C^2\times\Qq^*_{|\cdot |\leq 1 }$,
    \item $(a,b)+(x,y)=(ax,by)$ and now we consider different possibilities :
    \begin{itemize}
        \item $(a,b),(x,y)\in C^2\times \Qq^*_{|\cdot |\leq 1 }\ri (ax,by)\in  C^2\times \Qq^*_{|\cdot |\leq 1 }$,
        \item $(a,b),(x,y)\in \Qq^*\backslash C^2\times \Qq^*_{|\cdot |< 1 }\ri by\in \Qq^*_{|\cdot |< 1 }\ri (ax,by)\in P$,
        \item $(a,b)\in C^2\times \Qq^*_{|\cdot |\leq 1 }, (x,y)\in \Qq^*\backslash C^2\times \Qq^*_{|\cdot |< 1 }\ri (ax,by)\in \Qq^*\backslash C^2\times \Qq^*_{|\cdot |< 1 } \ri (ax,by)\in P $.
    \end{itemize}
\end{itemize}

\end{Example}
\begin{Example}[Internal category that is neither a groupoid nor a Schreier internal category]
    By slightly modifying the above example, we can get an internal category that is neither a groupoid nor a Schreier internal category.
     \begin{center}
        \begin{tikzcd}
    |[alias=HL]|\Rr^*&|[alias=HM]|\Rr^*\times \Rr^* &|[alias=HR]|\Rr^*\\
    &&\\
    |[alias=LL]| C^2 &|[alias=LM]| C^2\times \Qq^*_{|\cdot |\leq 1 }\cup  {\Qq^*_{|\cdot |< 1 }}^{\times 2}&|[alias=LR]|\Qq^*_{|\cdot |\leq 1 }\\ 
    \arrow[>->, from=LL, to=HL]
    \arrow[>->, from=HL, to=HM, "i_0"]
    \arrow[>->, from=LL, to=LM, "\overline{i_0}"]   
    \arrow[>->, from=LR, to=HR] 
    \arrow[>->, from=LM, to=HM]
    \arrow[->>, from=LM, to=LR, shift left=4, "\overline{p_1}"]
    \arrow[->>, from=LM, to=LR, shift right=5, "\overline{p_1}"']
    \arrow[->>, from=HM, to=HR, shift left=4, "p_1"]
    \arrow[->>, from=HM, to=HR, shift right=4, "p_1"']
    \arrow[>->, from=LR, to=LM, shift left=0, "\overline{i_1}"description]
    \arrow[>->, from=HR, to=HM, shift left=0, "i_1"description]
        \end{tikzcd}
    \end{center}
    
Now the morphism $\sigma : (a,b)\mapsto (a\mm, b)$ is not monotone as for example $\sigma(1/2, 1/2)=(2, 1/2)\not\in  C^2\times \Qq^*_{|\cdot |\leq 1 }\cup  {\Qq^*_{|\cdot |< 1 }}^{\times 2}$. It is not a Schreier internal category because $(1/2, 1/2)-(1,1/2)=(1/2, 1)\not\in  C^2\times \Qq^*_{|\cdot |\leq 1 }\cup  {\Qq^*_{|\cdot |< 1 }}^{\times 2}$. 
\end{Example}

Notice that for all the previous examples, all groups are abelian and therefore all examples are valid in both categories $\crpogrp$ and $\cpogrp$.

\section{Acknowledgements}
The author thanks his supervisor M. Gran for some suggestions on the subject of the article and for providing the example (\ref{EXG}).

\bibliographystyle{plain}
\bibliography{Biblio}
\end{document}